\theoremstyle{plain}
\newtheorem{Th}{Theorem}[section]
\newtheorem{Claim}[Th]{Claim}
\theoremstyle{definition}
\newtheorem*{Notation*}{Notation}
\newtheorem*{Rem*}{Remark}
\newtheorem{Prop}[Th]{Proposition}
\newtheorem{Theorem}{Theorem}
\theoremstyle{definition}
\newtheorem{Lemma}[Th]{Lemma}
\newtheorem{Cor}[Th]{Corollary}
\newtheorem{Question}{Question}
\newtheorem{Problem}[Question]{Problem}
\newtheorem{Conj}[Question]{Conjecture}
\theoremstyle{definition}
\newcolumntype{x}[1]{%
	>{\centering\hspace{0pt}}m{#1}}%
\newcommand{\tmu}{\nu}
\newcommand{\ra}{\rightarrow}
\newcommand{\cN}{\mathcal{N}}
\newcommand{\e}{\mathbb{E}}
\newcommand{\E}{\mathbb{E}}
\newcommand{\p}{\mathbb{P}}
\renewcommand{\P}{\mathbb{P}}
\newcommand{\muac}{\mu_{\text{ac}}}
\newcommand{\re}{{\mathbb{R}}}
\newcommand{\Z}{\mathbb{Z}}
\newcommand{\R}{\mathbb{R}}
\newcommand{\comp}{\mathbb{C}}
\newcommand{\lb}{\left(}
\newcommand{\rb}{\right)}
\newcommand{\nat}{\mathbb{N}}
\newcommand{\ep}{\varepsilon}
\newcommand{\uhp}{\mathbb{U}}
\newcommand{\lm}{\lambda}
\newcommand{\cF}{\mathcal{F}}
\newcommand{\Lsym}{\mathcal{L}^2_{\text{symm}}(\mu)}
\newcommand{\cart}{\mathcal{C}}
\newcommand{\mup}{\mu_{+}}
\newcommand{\mum}{\mu_{-}}
\DeclareMathOperator{\sgn}{sgn}
\DeclareMathOperator{\sprt}{\text{sprt}}
\newcommand{\U}{\mathbb{U}}
\begin{document}
	\onehalfspacing
	\title[Sharp transition]
 {A sharp transition in zero overcrowding and undercrowding probabilities for Stationary Gaussian Processes}
 % Sodin: not a phase transition
 %{A Sharp transition}
 %{A phase transition in zero count probability for Stationary Gaussian Processes}

\author[Naomi D. Feldheim]{Naomi Dvora Feldheim}
 \address{Naomi Dvora Feldheim \hfill\break
Department of Mathematics, Bar-Ilan University}
\email{naomi.feldheim@biu.ac.il}

\author[Ohad N. Feldheim]{Ohad Noy Feldheim}
 \address{Ohad Noy Feldheim\hfill\break
Einstein Institute of Mathematics, Hebrew Univesity of Jerusalem}
\email{ohad.feldheim@mail.huji.ac.il}

    \author[Lakshmi Priya]{Lakshmi Priya M.E.}
    \address{Lakshmi Priya M.E.\hfill\break School of Mathematical Sciences, Tel-Aviv University}
    \email{lpriyame@gmail.com}
	
\thanks{N.F. and O.F. acknowledge the support of ISF grant 1327/19. L.P.M.E. acknowledges the support of ERC Advanced
Grant 692616, ERC consolidator grant 101001124 (UniversalMap) and ISF grant 1294/19.}
%	\date{today}
	\keywords{Stationary Gaussian processes, zero count, overcrowding}
	\subjclass[2010]{Primary: 60F10, 60G10, 60G15 Secondary: 30D15}
	%60G15  	Gaussian processes
	%60G10  	Stationary stochastic processes
	%60F10  	Large deviations
        %30D15  	Special classes of entire functions of one complex variable and growth estimates
        %60D05     Geometric probability and stochastic geometry
	%	42A38  	Fourier and Fourier-Stieltjes transforms and other transforms of Fourier type
	\begin{abstract}
		{We study the probability that a real stationary Gaussian process has at least $\eta T$ zeros in $[0,T]$ (overcrowding), or at most this number (undercrowding). We show that if the spectral measure of the process is supported on $\pm[B,A]$, overcrowding probability transitions from exponential decay to Gaussian decay at $\eta=\tfrac{A}{\pi}$, while undercrowding probability undergoes the reverse transition at $\eta=\tfrac{B}{\pi}$.}
	\end{abstract}
	\maketitle
	
\section{Introduction}
%\section{Main results}

Let $F:\re\to\re$ be a continuous centered stationary Gaussian process (SGP), that is, a shift-invariant random function whose finite marginal distributions are multi-normal with zero mean. Such a process is characterized by its \emph{covariance kernel}, $k(t) = \E[F(0) F(t)]$,
or, equivalently, by its \emph{spectral measure} $\mu$, which is the finite, non-negative, symmetric measure obtained as the inverse Fourier transform of $k$, so that,
\[
k(t) = \cF[\mu](t)= \int_\re e^{-i \lm t} d\mu(\lm).
\]
Throughout we assume $k(0)=1$, so that $\mu$ is normalized to be a probability measure.
The counting measure for the zero point process associated with $F$ is generated by 
\[
N_F(T) = N(T) =\# \{ t\in [0,T]: \: f(t)=0\}.
\]
The zeros of stationary Gaussian functions form a family of well-studied stochastic point processes with numerous applications~\cite{AT07, AW, CL67, kratzsurvey}. 
The expectation of $N_F(T)$ is given by the Kac-Rice formula~\cite{kac,rice},
\begin{equation*}%\label{eq: Kac-Rice}
\E [N_F(T)] = \frac{\gamma}{\pi} T,\quad \text{where}\quad
\gamma^2 = -k''(0) = \int_\re \lm^2 d\mu(\lm).
\end{equation*}
%Since this formula was established , these processes have been the subject of extensive study (see an excellent '01 survey by Kratz \cite{Kratz} and very recent state-of-the-art results by Ancona-Letendre \cite{AL21}).
Variance and higher moments of $N(T)$ have also been studied \cite{ABF21, Slud94}, along with results about clustering and limit theorems~\cite{AL21, KL01}.
Nevertheless, rare events involving such processes are generally not well-understood (see discussion in~\cite{bdfz} and~\cite[Q. 7]{ms}). 

Here we study linear deviations of $N_F(T)$ as $T$ tends to infinity. These consist of \emph{$\eta$-overcrowding} events, given by 
$\{N_F(T) \ge \eta T\}$ for $\eta > \e[N_F(1)]$, and 
\emph{$\eta$-undercrowding} events, given by
$\{N_F(T) \le  \eta T\}$ for $\eta < \e[N_F(1)]$.

Basu, Dembo, Zeitouni and the first author showed in~\cite{bdfz} that, for well-behaved SGPs and any $\eta>\E[N_F(1)]$, the probability of $\eta$-overcrowding decays at most exponentially with $T$. In contrast, the third author showed in \cite{lp2} that when the spectral measure is compactly supported and $\eta$ is sufficiently large, the $\eta$-overcrowding probability decays in a Gaussian fashion or faster. Here we establish a sharp phase transition between these two decay profiles, occurring at $\eta=\tfrac{A}{\pi}$ where $A$ is the supremum of the support of the spectrum. 

%Throughout we fix $\mu$, and write $[B,A]$ for the minimal non-negative interval such that $\mu$ is supported on $\pm[B,A]$.

\begin{Theorem}\label{thm_overcrowding} 
  Suppose that $\mu$ is compactly supported with $\muac \not\equiv 0$. 
  Let $A$ be the smallest positive number such that $\text{sprt}(\mu) \subseteq [-A,A]$. Then: %we have the following:
		\begin{enumerate}
			\item\label{item: over 1} There exist $c,d_1,d_2>0$ such that, for every $T\ge e^8$ and $\ep \in \left[d_1{{\tfrac {\log T}{\sqrt T}}}, d_2\right]$, we have
			\begin{align*}
			\p \lb N_{F}(T) \geq \frac{A}{\pi}  T + \ep T\rb \le \exp \lb -c (\ep/\log \ep)^4 T^2 \rb.
			\end{align*} 
			\item \label{item: over 2} For every $\ep >0$ there exists $c >0$ such that, for all $T\ge 1$, we have
			\begin{align*}
			\p\lb N_F(T) \geq \frac{A}{\pi} T - \ep T \rb \ge\exp(-cT). 
			\end{align*}
		\end{enumerate}
	\end{Theorem}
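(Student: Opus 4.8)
The plan is to prove the two parts by quite different mechanisms, reflecting the two sides of the transition.

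For part~\eqref{item: over 2}, the lower bound $\P(N_F(T) \ge \tfrac{A}{\pi}T - \ep T) \ge e^{-cT}$, I would argue that it suffices to produce, with probability at least $e^{-cT}$, a configuration of zeros with the prescribed (slightly sub-critical) density. Since $\muac \not\equiv 0$, one can decompose $F = \sqrt{\delta}\, G + \sqrt{1-\delta}\, H$ where $G$ is an independent SGP whose spectral measure is an absolutely continuous piece of $\mu$; after rescaling, $G$ is a nondegenerate smooth SGP with a spectral density bounded below on an interval. The standard approach is then to force $F$ to oscillate at nearly the maximal rate $A/\pi$ by conditioning $G$ (equivalently, tilting the Gaussian measure) to be close, in a strong enough norm on $[0,T]$, to a deterministic target function such as $x \mapsto c_0\cos((A-\ep')x)$ which has $\approx \tfrac{A-\ep'}{\pi}T$ sign changes with well-separated transversal crossings; the $L^2$-type cost of such a deviation on $[0,T]$ is $O(T)$ because the target has bounded "energy per unit length" in the relevant reproducing-kernel norm. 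One must check that the remaining part $\sqrt{1-\delta}H$, which is not controlled, does not destroy too many of these crossings — this is handled by making the target amplitude $c_0$ large and invoking a union bound / Borell–TIS on $\sup_{[0,T]}|H|$, which costs only another factor $e^{-c'T}$. Concatenating these estimates gives the claimed $e^{-cT}$ bound. I expect this part to be comparatively routine; it is essentially the construction already present in~\cite{lp2} or a minor variant.

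For part~\eqref{item: over 1}, the super-exponential (indeed Gaussian-type) upper bound above the critical density, the plan is to exploit a hard band-limiting obstruction. The key point is that if $\mathrm{sprt}(\mu) \subseteq [-A,A]$ then every sample path of $F$ lies almost surely in the Paley–Wiener space of entire functions of exponential type $\le A$ (more precisely $F$ extends to such an entire function, since the covariance kernel does). A band-limited function of exponential type $A$ cannot have more than $\tfrac{A}{\pi}$ real zeros per unit length "for free" — this is a real-variable density statement, but the relevant sharp tool is a result on zero counting for entire functions of exponential type, e.g. via Jensen's formula applied on large disks or Ullrich/Logvinenko–type inequalities. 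Having $\tfrac{A}{\pi}T + \ep T$ zeros on $[0,T]$ therefore forces the path to be, quantitatively, "close to extremal": I would encode this by showing that such an event entails an unusually small value of some functional of $F$ — for instance, many of these excess zeros must be clustered (by pigeonhole there is a subinterval of length $\asymp 1/\ep$ carrying $\asymp 1/\ep + $ constant extra zeros beyond the band-limited budget), and a band-limited function with that many zeros in a short interval must have an anomalously small $\sup$-norm there relative to its norm on a neighborhood, by a Remez/Turán-type inequality for exponential-type functions. One then runs this over $\asymp \ep T$ disjoint short intervals (at least $\asymp \ep^2 T^2$ "independent-ish" small-ball constraints after accounting for correlation decay, which is where the $T^2$ and the $(\ep/\log\ep)^4$ come from), and bounds the resulting small-ball probability for the Gaussian vector of local sup-norms using the spectral lower bound coming from $\muac\not\equiv 0$ together with anti-concentration (Carbery–Wright or a direct Gaussian small-ball estimate), multiplying the costs. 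The logarithmic loss $\ep/\log\ep$ and the lower cutoff $\ep \gtrsim \log T/\sqrt T$ enter precisely through these Remez-type constants and the number of near-independent blocks one can extract.

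The main obstacle is the quantitative band-limited zero-counting step in part~\eqref{item: over 1}: turning the soft statement "type-$A$ functions have density $\le A/\pi$" into an effective small-ball penalty with the right powers of $\ep$ and $T$. Concretely, one needs a Remez-type inequality of the form: if $g$ is entire of exponential type $A$ and has $m$ zeros in an interval $I$ of length $\ell$, then $\inf_{I'} $ or rather the $\sup_{I}|g|$ is smaller than $\sup_{J}|g|$ (for $J \supset I$ a fixed dilate) by a factor like $e^{-c(m - \tfrac{A}{\pi}\ell)_+ \cdot (\cdot)}$, uniformly in $\ell$ — and then to patch these local estimates into a global Gaussian-decay bound while controlling how the spectral measure's absolutely continuous part guarantees nondegeneracy of all the relevant finite-dimensional marginals. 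Everything else (the concatenation, Borell–TIS, the reduction to small-ball for a Gaussian vector) is standard once this deterministic input is in hand.
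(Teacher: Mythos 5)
Your plan for part~\eqref{item: over 2} has a concrete gap. You propose splitting off an absolutely continuous piece $G$ of $F$ and tilting $G$ to be close to $x\mapsto c_0\cos\bigl((A-\ep')x\bigr)$, claiming the Cameron--Martin cost is $O(T)$. But $\muac$ need only be nontrivial somewhere; nothing prevents it from being supported on, say, $[0,A/2]$, in which case $G$ has no spectral mass near $A$, the target $\cos\bigl((A-\ep')x\bigr)$ is not in (or is exponentially far from) the reproducing kernel Hilbert space of $G$ on $[0,T]$, and the tilting cost is not $O(T)$. The paper's Proposition~\ref{prop:lb} sidesteps this: it picks an interval $I$ of width $\asymp 1/T$ near the target frequency $X$ (here $X=A$, where $\mu$ automatically has mass since $A=\sup\sprt(\mu)$), takes an orthonormal basis of $\Lsym$ whose first element is supported on $\pm I$, and tilts only the single Gaussian coefficient $\xi_1$ to be of size $\asymp \sqrt{T}$. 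The associated basis function $\widehat{f_1}$ is automatically $1/10$-close to $\cos(ax)$ on $[0,T]$ because $I$ is so narrow, and the one-dimensional Gaussian tail costs exactly $e^{-cT}$. The remainder $G=F-\xi_1\widehat{f_1}$ is then controlled by ball estimates on $G$ and $G'$ (Lemmas~\ref{lem:ander} and~\ref{lem: FFN ball}) at exponential cost. This construction uses $\muac\not\equiv 0$ only through the ball estimate, not through the tilting, and therefore works regardless of where the a.c.\ part sits.

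For part~\eqref{item: over 1}, you and the paper agree on the starting point (a.s.\ extension to an entire function of exponential type $\le A$, Jensen-type zero counting), but after that you diverge substantially. You propose a pigeonhole argument extracting $\asymp \ep T$ short intervals of length $\asymp 1/\ep$ carrying ``excess'' zeros, a Remez/Tur\'an-type smallness penalty on each, and a multi-block anti-concentration estimate. The paper instead sums Jensen's formula (Theorem~\ref{Jensens}) over $n\approx 1/\ep^2$ \emph{macroscopic} discs of radius $r=\ep T$ with centers spaced $\Delta=\ep^2 T$ apart; the weighted sum of $n_F(x_k,t)/t$ recovers $N_F(T)$ up to a $(1/\ep)$-factor, the boundary terms $\int_0^{2\pi}\log|F(x_k+re^{i\theta})|\,d\theta$ are bounded via Phragm\'en--Lindel\"of (Corollary~\ref{cor:analogue_of_pl}) and the linear-growth event of Lemma~\ref{lem:lineargrowth} with $M=T$ (contributing the $e^{-cT^2}$), and the centers $x_k$ are chosen on the small-ball event of Lemma~\ref{lem:smallballprob} to make $-\log|F(x_k)|$ manageable (contributing the $e^{-c\ep^4T^2}$). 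You yourself flag the local Remez step as the unresolved obstacle, and I share that concern: on intervals of length $O(1/\ep)$ a type-$A$ function can accumulate many zeros without any forced small-ball penalty relative to a fixed dilate, so it is not clear your local scheme yields the correct $T^2$ and $(\ep/\log\ep)^4$; the paper's global averaging over $\ep T$-discs is what makes the argument close.
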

%Our second theorem describes a phase transition for \emph{undercrowding} probabilities, which occurs whenever $\mu$ has a spectral gap at the origin.
An analogous phase transition is established for undercrowding, in case $\mu$ has a spectral gap.
%whenever $B>0$.

\begin{Theorem}\label{thm_undercrowding}  Suppose that $\mu$ is compactly supported with $\muac \not\equiv 0$. Let~$B$ be the largest and~$A$ the smallest non-negative numbers such that $\sprt(\mu) \subseteq [-A,-B]\cup [B,A]$.
Then:
\begin{enumerate}
	\item\label{item: under 1} There exist $c,d_1,d_2>0$ such that, for every $T\ge e^8$ and
 $\ep \in \left[d_1{\tfrac {\log T}{\sqrt T}}, d_2\right]$,
 we have
	\begin{align*}
	\p \lb N_F (T)  \leq \frac{B}{\pi}T - \ep T\rb \le \exp(-c(\ep/ \log \ep)^4 T^2).
\end{align*} 
	\item\label{item: under 2} For every $\ep >0$, there  exists $c >0$ such that, for all $T\ge 1$, we have
 \begin{align*}
	\p \lb N_F(T) \le \frac{B}{\pi}T + \ep T \rb \geq \exp(-cT).
	\end{align*}
	\end{enumerate}
	
	\end{Theorem}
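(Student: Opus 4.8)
\emph{Proof proposal.} The plan is to deduce item (1) from Theorem~\ref{thm_overcrowding} by passing to an envelope--phase representation of $F$ aligned with the left spectral edge $B$, and to obtain item (2) by the same kind of explicit construction that underlies Theorem~\ref{thm_overcrowding}(2). We may assume $A>B$: otherwise $\mu=\tfrac12(\delta_B+\delta_{-B})$, $F$ is a random sinusoid, $N_F(T)=\tfrac B\pi T+O(1)$ a.s., and both assertions are immediate. Since $\sprt(\mu)\subseteq[-A,-B]\cup[B,A]$, collapsing the two spectral bands yields
\[
F(t)=2\,\mathrm{Re}\big[e^{-iBt}Z(t)\big],\qquad Z(t):=\int_{[0,A-B]}e^{-i\mu t}\,dW(\mu+B),
\]
where $Z$ is a centered, circularly-symmetric (``proper'': $\E[Z(t)Z(s)]\equiv 0$) $\comp$-valued stationary Gaussian process whose spectral measure $\rho$ is the push-forward of $\mu|_{[B,A]}$ under $\lambda\mapsto\lambda-B$; thus $\rho$ is supported in $[0,A-B]$, has total mass $\tfrac12$, satisfies $\inf\sprt(\rho)=0$ and $\sup\sprt(\rho)=A-B$, and $\rho_{\mathrm{ac}}\not\equiv 0$. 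As $\mu$ is compactly supported, $Z$ has smooth paths, and a Bulinskaya-type argument gives that a.s.\ $Z(t)\ne 0$ for all $t$; fix a continuous (hence $C^1$) lift $\Psi$ of $\arg Z$. Then a.s.\ $F(t)=2|Z(t)|\cos(Bt-\Psi(t))$, so
\[
N_F(T)=\#\{t\in[0,T]:\ Bt-\Psi(t)\in\tfrac\pi2+\pi\Z\}\ \ge\ \tfrac1\pi\big(\sup_{[0,T]}(Bt-\Psi)-\inf_{[0,T]}(Bt-\Psi)\big)-1\ \ge\ \tfrac{BT}\pi-\tfrac{\Psi(T)-\Psi(0)}\pi-1,
\]
the last step using $\sup\ge$ value at $T$ and $\inf\le$ value at $0$. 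Hence on $\{N_F(T)\le\tfrac B\pi T-\ep T\}$ we have $\Psi(T)-\Psi(0)\ge\pi\ep T-\pi$.

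For item (1), set $\theta:=\tfrac{A-B}2$ and $G(t):=2\,\mathrm{Re}[e^{i\theta t}Z(t)]$. Using properness of $Z$, $G$ is a centered stationary Gaussian process with covariance $\tau\mapsto\int e^{-i\xi\tau}\,d\sigma(\xi)$, where $\sigma$ is the symmetrization of the push-forward of $\rho$ under $\mu\mapsto\mu-\theta$; hence $\sigma$ is a probability measure, compactly supported in $[-\theta,\theta]$, with $\sigma_{\mathrm{ac}}\not\equiv 0$ and $\sup\sprt(\sigma)=\theta$ (attained since $A-B\in\sprt(\rho)$). Moreover $G(t)=2|Z(t)|\cos(\theta t+\Psi(t))$, so applying the same crossing inequality to $h(t)=\theta t+\Psi(t)$ gives $N_G(T)\ge\tfrac1\pi\big(\theta T+\Psi(T)-\Psi(0)\big)-1$. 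Combining with the previous display, on $\{N_F(T)\le\tfrac B\pi T-\ep T\}$ we get $N_G(T)\ge\tfrac\theta\pi T+\ep T-2\ge\tfrac\theta\pi T+\tfrac\ep2 T$ for $T\ge 4/\ep$. Therefore
\[
\P\big(N_F(T)\le\tfrac B\pi T-\ep T\big)\ \le\ \P\big(N_G(T)\ge\tfrac\theta\pi T+\tfrac\ep2 T\big),
\]
and since $\theta=\sup\sprt(\sigma)$ plays the role of ``$A$'' in Theorem~\ref{thm_overcrowding} for the process $G$, item (1) of that theorem applied to $G$ with $\ep/2$ in place of $\ep$ bounds the right-hand side by $\exp\!\big(-c(\tfrac{\ep/2}{\log(\ep/2)})^4T^2\big)\le\exp\!\big(-c'(\ep/\log\ep)^4T^2\big)$, valid for $T\ge e^8$ and $\ep$ in the asserted range once $d_1$ is taken large enough and $d_2$ small enough (the admissibility window and the exponent are preserved up to constants because $\tfrac{\ep/2}{\log(\ep/2)}\asymp\tfrac\ep{\log\ep}$ for small $\ep$). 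This is item (1).

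For item (2) one runs the construction used for Theorem~\ref{thm_overcrowding}(2), now anchored at the edge $B$ rather than $A$. The point is that $F$ has at most $(\tfrac B\pi+\ep)T+O(1)$ zeros on $[0,T]$ whenever $F$ is $C^1$-close on $[0,T]$ to a fixed ``clean'' wave $g$ oscillating at rate $\le\tfrac B\pi+\ep$ whose zeros are transversal with derivative bounded away from $0$. Such a $g$ can be produced inside the Cameron--Martin space $H_\mu$ of $F$ with $\|g\|_{H_\mu}^2=O(T)$ by concentrating the spectrum of $g$ in the window $[B,B+\pi\ep)$, which is available because $B\in\sprt(\mu)$ forces $\mu([B,B+\pi\ep))>0$; a prolate-type concentration estimate keeps the norm $O(T)$ (and it is even $O(1)$ if $\mu$ has an atom in that window). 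Then the Cameron--Martin shift inequality together with the small-ball bound $\P(\|F\|_{C^1[0,T]}<\delta)\ge e^{-C(\delta)T}$ gives $\P(\|F-g\|_{C^1[0,T]}<\delta)\ge e^{-\|g\|_{H_\mu}^2/2-C(\delta)T}\ge e^{-cT}$, and on this event $N_F(T)\le(\tfrac B\pi+\ep)T$. (For bounded $T$ the assertion is trivial.) Replacing $\ep$ by $\ep/2$ at the outset yields item (2).

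The conceptual heart is the edge-adapted envelope--phase reduction of item (1); it is short, and the only delicate point there is matching the admissibility window and the exponent precisely so that Theorem~\ref{thm_overcrowding} can be quoted essentially verbatim for $G$. The genuinely technical work is item (2): because $\mu$ is compactly supported the process does not decorrelate, so one cannot simply multiply per-block probabilities, and the relevant template must be built globally on $[0,T]$ out of spectral mass near $B$, with a prolate-type concentration estimate (or a successive-conditioning argument, as in~\cite{bdfz}) doing the work of keeping the cost merely exponential. I expect that to be the main obstacle, although the argument is by now standard in this circle of problems.
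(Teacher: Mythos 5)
Your argument for item~(1) is correct and takes a genuinely different route from the paper. You pass to the one-sided analytic signal $Z$ (spectrum pushed into $[0,A-B]$), write $F=2|Z|\cos(Bt-\Psi)$ with a continuous phase lift $\Psi$ (which exists and is $C^1$ a.s.\ by a Bulinskaya argument, since $Z$ is a non-degenerate $C^1$ proper complex Gaussian process), and compare to the \emph{midpoint}-demodulated $G=2\,\text{Re}\big[e^{i(A-B)t/2}Z\big]=2|Z|\cos\big(\tfrac{A-B}{2}t+\Psi\big)$, whose spectral measure is symmetric, supported in $\big[-\tfrac{A-B}{2},\tfrac{A-B}{2}\big]$, and inherits a non-trivial a.c.\ component; the two crossing inequalities cancel $\Psi(T)-\Psi(0)$, and Theorem~\ref{thm_overcrowding}(1) applied to $G$ finishes. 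The paper instead shifts the two bands by $\mp A$, producing a process $G$ with spectrum in $[B-A,A-B]$ and the explicit coupling $G(x)=\cos(Ax)F(x)+\sin(Ax)H(x)$ of Claim~\ref{claim_undercrowding}; the payoff is the pointwise identity $G(k\pi/A)=(-1)^k F(k\pi/A)$, from which the purely deterministic inequality $N_F(T)\ge\lfloor AT/\pi\rfloor-N_G(T)$ follows, with no Bulinskaya step and no phase lift. Both routes are valid: yours is natural from the demodulation viewpoint and isolates the phase increment as the quantity conserved by band-collapse; the paper's is leaner and sidesteps the overhead (properness and a.s.\ non-vanishing of $Z$, existence of a $C^1$ lift) that your argument must set up.

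For item~(2), the paper obtains it as an immediate corollary of Proposition~\ref{prop:lb} with $X=B$, since $\P\big(N_F(T)/T\in[\tfrac{B-2\ep}{\pi},\tfrac{B+2\ep}{\pi}]\big)\ge e^{-cT}$ is stronger than item~(2). Your Cameron--Martin sketch aims at the same construction but is hand-waved exactly where the content lies: the bound $\|g\|_{H_\mu}^2=O(T)$ is not a ``prolate-type'' estimate but the pigeonhole bound~\eqref{eq:mu I}, which produces an interval $I$ of length $\asymp 1/T$ near $B$ with $\mu(I)\gtrsim 1/T$, and this is what controls the cost. The paper's proof of Proposition~\ref{prop:lb} also does not proceed by Cameron--Martin shifting; it peels off the single Gaussian coefficient along $\widehat f_1$ (with $f_1$ the normalized indicator of $\pm I$), conditions it to be large, and controls the $C^1$ norm of the remainder via Anderson's inequality, the Gaussian correlation inequality, and the ball estimate of Lemma~\ref{lem: FFN ball}. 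You would do better to state and invoke a lemma in the form of Proposition~\ref{prop:lb} than to re-derive it informally inside this proof.
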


The remainder of the paper is structured as follows. 
In Sections~\ref{sec: story low} and~\ref{sec: story up} we describe the main steps of the proofs of the lower and upper bounds, respectively. In Section~\ref{sec:tightness} we discuss tightness of the results. In Section~\ref{sec:2.1} we discuss their role in the broader context of the study of zeros of SGPs. In Section~\ref{sec:det ang} we mention deterministic analogues which inspired the proof. Section~\ref{sec:fut} is dedicated to open problems and future research.
Section~\ref{sec: prelim} is a preliminary section containing useful results about entire functions and Gaussian processes. In Section~\ref{sec:lower bound} we prove 
%Proposition~\ref{prop:lb} and thus
a result (Proposition~\ref{prop:lb} below) which implies the lower bounds in both theorems. Section~\ref{sec:over} contains the proof of the upper bound of Theorem~\ref{thm_overcrowding}, and Section~\ref{sec:under} contains a reduction of the upper bound of Theorem~\ref{thm_undercrowding} to that of Theorem~\ref{thm_overcrowding}.

\subsection{Main steps of the proof: lower bound.}\label{sec: story low}
The lower bound in both theorems readily follows from the following proposition, which is concerned with obtaining tight control over~$\frac{N(T)}{T}$ and may be of independent interest.

\begin{Prop} \label{prop:lb} 
Suppose that $\int\lambda^{2}\log^2{\lambda}\, d\mu(\lambda)<\infty$ and $\mu([X-\ep, X+\ep])>0$ for some $X\in (0,\infty)$ and $\ep>0$.  Then there exists $c>0$ such that
\begin{equation}\label{eq:eq lb}
\P\left(\frac{N(T)}{T} \in \left[\frac{X-2\ep}{\pi},\frac{X+2\ep}{\pi}\right]\right)\ge \exp(-cT),\quad
\text{for sufficiently large } T.
\end{equation}
Moreover, if $\lim\sup_{\delta\to 0}\frac{\mu([X-\delta, X+\delta])}{2\delta}=\infty$, then \eqref{eq:eq lb} holds for any choice of $c>0$.
\end{Prop}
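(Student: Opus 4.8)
The plan is to force $F$ to be $C^1$-close, on $[0,T]$, to a fixed large-amplitude cosine of a frequency in $\sprt(\mu)$ near $X$, and to note that on such an event the number of zeros is pinned to $\tfrac{X}{\pi}T$ up to a bounded error. Since $\mu([X-\ep,X+\ep])>0$, the support of $\mu$ meets $[X-\ep,X+\ep]$, so I would first pick $X'\in\sprt(\mu)$ with $|X'-X|\le\ep$ and replace $X$ by $X'$; as $[\tfrac{X'-\ep}{\pi},\tfrac{X'+\ep}{\pi}]\subseteq[\tfrac{X-2\ep}{\pi},\tfrac{X+2\ep}{\pi}]$ it suffices to prove \eqref{eq:eq lb} with $X'$ in place of $X$ and $\ep$ in place of $2\ep$, so we may assume $X\in\sprt(\mu)$, whence $\mu([X-\beta,X+\beta])>0$ for all $\beta>0$. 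Now fix $A>0$ (to be chosen) and $\alpha\in\re$, set $g(t)=A\cos(Xt-\alpha)$ and $\rho=\tfrac{1}{20}A\min\{1,X\}$. The zeros of $g$ are $\tfrac{\pi}{X}$ apart, $|g'|=AX\ge 20\rho$ at each of them, and $|g|\ge A/\sqrt2>\rho$ on the middle portion of each period; hence any $C^1$ function $h$ on $[0,T]$ with $\|h-g\|_{C^1[0,T]}\le\rho$ has exactly one zero near each zero of $g$ in $(0,T)$ and no others apart from $O(1)$ near the endpoints, so $|N_h(T)-\tfrac{X}{\pi}T|\le C_0$ with $C_0$ an absolute constant. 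Consequently, for $T\ge \pi C_0/\ep$ the event $\{\|F-g\|_{C^1[0,T]}\le\rho\}$ is contained in $\{N_F(T)/T\in[\tfrac{X-\ep}{\pi},\tfrac{X+\ep}{\pi}]\}$, and it remains to bound $\p(\|F-g\|_{C^1[0,T]}\le\rho)$ from below.

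To this end, fix $\beta_0\in(0,\ep]$ and write $F=F_1+F_2$ for the independent decomposition in which $F_1$, $F_2$ have spectral measures $\mu|_{\tilde I}$ and $\mu|_{\re\setminus\tilde I}$ respectively, where $\tilde I=[X-\beta_0,X+\beta_0]\cup[-X-\beta_0,-X+\beta_0]$. Then
\[
\p(\|F-g\|_{C^1[0,T]}\le\rho)\ \ge\ \p\big(\|F_1-g\|_{C^1[0,T]}\le\tfrac{\rho}{2}\big)\cdot\p\big(\|F_2\|_{C^1[0,T]}\le\tfrac{\rho}{2}\big).
\]
The second factor is handled by the small-deviation (persistence) estimate of Section~\ref{sec: prelim}, which applies because $\int\lambda^2\log^2\lambda\,d\mu_2\le\int\lambda^2\log^2\lambda\,d\mu<\infty$ and gives $\p(\|F_2\|_{C^1[0,T]}\le\tfrac{\rho}{2})\ge e^{-c_2T}$ for large $T$; by Anderson's inequality one in fact has $\p(\|F_2\|_{C^1[0,T]}\le\tfrac{\rho}{2})\ge\p(\|F\|_{C^1[0,T]}\le\tfrac{\rho}{2})$, so $c_2$ can be taken to be the small-deviation rate of $F$ at radius $\tfrac{\rho}{2}$, which is independent of $\beta_0$ and tends to $0$ as $\rho\to\infty$. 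For the first factor, write $F_1(t)=\Re\!\big(e^{-iXt}Z(t)\big)$, where $Z$ is a $\comp$-valued stationary Gaussian process whose spectral measure is supported on $[-\beta_0,\beta_0]$, has total mass comparable to $q_{\beta_0}:=\mu([X-\beta_0,X+\beta_0])>0$, and contains $0$ in its support; since $g(t)=\Re(e^{-iXt}z_*)$ with $z_*=Ae^{i\alpha}$ and $\|F_1-g\|_{C^1[0,T]}\le(1+X)\|Z-z_*\|_{C^1[0,T]}$, it is enough to bound $\p(\|Z-z_*\|_{C^1[0,T]}\le\rho')$ from below with $\rho'=\tfrac{\rho}{2(1+X)}$. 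This is a small-deviation estimate for the very-narrow-band process $Z$ around the constant $z_*$: treating $[0,T]$ as $\asymp\beta_0T$ coherence windows of length $\asymp 1/\beta_0$, on each of which $Z$ is essentially constant and must be coaxed within $\rho'$ of $z_*$ at a cost $\asymp\exp(-cA^2/q_{\beta_0})$, one obtains $\p(\|Z-z_*\|_{C^1[0,T]}\le\rho')\ge\exp\!\big(-C(\tfrac{A^2}{q_{\beta_0}}+1)\beta_0T\big)$ for large $T$ (this can also be quoted from the small-deviation-around-a-trigonometric-polynomial lemma of Section~\ref{sec: prelim}). Multiplying the three estimates proves the existence of $c>0$ with \eqref{eq:eq lb}.

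The hard part is the two small-deviation estimates --- i.e., that staying $C^1$-small, or $C^1$-close to a slowly varying target, on an interval of length $T$ costs only $e^{O(T)}$; this is exactly where $\int\lambda^2\log^2\lambda\,d\mu<\infty$ enters (it controls the spectral spread of $F'$), and I would invoke the corresponding lemma from the preliminary section. Everything else --- the geometric zero count, the independent spectral splitting, and the reduction to $X\in\sprt(\mu)$ --- is routine. For the last assertion, assume $\limsup_{\delta\to0}\tfrac{\mu([X-\delta,X+\delta])}{2\delta}=\infty$; by the estimates above the rate in \eqref{eq:eq lb} is $\lesssim(\tfrac{A^2}{q_{\beta_0}}+1)\beta_0+c_2(\rho)$ with $\rho=\tfrac{1}{20}A\min\{1,X\}$, so given $c>0$ I would first choose $A$ large enough that $c_2(\rho)<\tfrac{c}{3}$ (possible since that rate is independent of $\beta_0$ and vanishes as $\rho\to\infty$), and then use the hypothesis to choose $\beta_0$ so small that $q_{\beta_0}/\beta_0>3A^2/c$, making $(\tfrac{A^2}{q_{\beta_0}}+1)\beta_0<c/2$. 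The most transparent instance is an atom of $\mu$ at $X$: then $F_1$ contains $\sqrt{2\mu(\{X\})}\,(a\cos Xt+b\sin Xt)$ with $a,b$ i.i.d.\ standard normal, so $z_*$ lies in its reproducing-kernel space with a $T$-independent norm $\asymp A/\sqrt{\mu(\{X\})}$, and a Cameron--Martin shift makes $\p(\|F_1-g\|_{C^1[0,T]}\le\tfrac{\rho}{2})$ bounded below uniformly in $T$; letting $A\to\infty$ then drives the whole rate to $0$.
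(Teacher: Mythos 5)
Your overall architecture is sound and the geometric step (forcing $F$ to be $C^1$-close to a large cosine and then counting zeros by a sign-change/monotonicity argument) is essentially correct and matches the paper's. The wideband factor is also handled correctly: Anderson's inequality plus Lemma~\ref{lem: FFN ball} plus the Gaussian correlation inequality give $\p(\|F_2\|_{C^1[0,T]}\le\rho/2)\ge e^{-c_2T}$, exactly as the paper does for its remainder process $G$.

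The gap is in the narrowband factor $\p(\|F_1-g\|_{C^1[0,T]}\le\rho/2)$. You invoke a ``small-deviation-around-a-trigonometric-polynomial lemma of Section~\ref{sec: prelim}'' that does not exist in the paper; the only small-ball lower bound there is Lemma~\ref{lem: FFN ball}, which is for centered balls. Your coherence-window heuristic cannot be turned into a proof by the paper's tools: the events $\{|Z(t_k)-z_*|\le\rho'\}$ are translates of balls by the nonzero constant $z_*$, hence not symmetric about the origin, so the Gaussian correlation inequality does not let you multiply them across windows; and a Cameron--Martin shift would require bounding $\|g\|_{\mathrm{RKHS}(F_1,[0,T])}\lesssim\sqrt T$ uniformly over the class of admissible $\mu|_{\tilde I}$ (which could be atomic, singular-continuous, or have vanishing density), which is exactly the nontrivial content and is not established.

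The paper sidesteps this entirely by letting the narrowband width shrink with $T$: by pigeonhole it finds an interval $I$ of width $\tfrac1{10T}$ around some $a\in(X-\ep,X+\ep)$ with $\mu(I)\gtrsim\tfrac1{\ep T}\mu([X-\ep,X+\ep])$, and isolates the single Gaussian coordinate $\xi_1$ in the direction $\widehat{f}_1=\widehat{\mathbbm 1_{\pm I}}/\sqrt{\mu(\pm I)}$. Because $|I|\lesssim1/T$, the function $\widehat{f}_1$ is $C^1$-close to a multiple of $\cos(ax)$ on the \emph{whole} interval $[0,T]$ (the coherence length covers $[0,T]$), so there is only one ``window'' and the cost of making the cosine large is the one-dimensional Gaussian tail $\p(\xi_1\ge 5\sqrt{2/\mu(I)}\,L)\ge e^{-50L^2\kappa T}$ with $\kappa=\tfrac{\ep}{10\mu([X-\ep,X+\ep])}$, which is automatically linear in $T$ and independent of the fine structure of $\mu$ near $X$. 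The ``moreover'' part then comes for free, since if $\mu([X-\delta,X+\delta])/(2\delta)\to\infty$ one may take $\ep=\delta$ small and $L=\kappa^{-1/3}$ arbitrarily large, driving all three rates to zero. If you want to salvage your route, you would need to prove a genuine small-deviation estimate for a narrowband Gaussian process around a deterministic cosine, uniformly over the admissible spectral measures; replacing the fixed $\beta_0$ by a $T$-dependent width $\asymp1/T$, as the paper does, is the cleanest way to make that tractable.
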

This is obtained by estimating the probability that $F$ closely imitates a pure wave.
The lower bounds in Theorems~\ref{thm_overcrowding} and~\ref{thm_undercrowding} are then obtained by the choice of $X=A$ and $X=B$ respectively. 

\subsection{Main steps of the proof: upper bound.}\label{sec: story up}
The proof of the upper bound given in Theorem~\ref{thm_overcrowding} uses classical tools from complex analysis. In fact, rather than providing a direct upper bound for the number of real zeros $F$ has in $[0,T]$, we shall provide a bound for the number of complex zeros of the analytic extension of $F$ in a neighbourhood of this interval. A similar idea was used in~\cite{bdfz}.
 
 %(Claim \ref{lineargrowth} and Lemma \ref{exponentialtype}) 

An entire function $f: \comp \ra \comp$ is  of \emph{exponential type}, if there exist $\sigma,C>0$ such that 
\begin{equation}\label{eq:exp_type}
|f(z)| \le C\exp(\sigma|z|),\quad\forall z \in \comp.
\end{equation}
The \emph{exponential type of $f$} is then defined as the infimum over all $\sigma$ for which this inequality is satisfied for some value of $C$.
%Let $\U:= \{z\in \comp: \text{Im}(z)> 0\}$,$f$ is said to be of \emph{exponential type on $\U$}, if it satisfies the same definition with \eqref{eq:exp_type} restricted to $z\in \U$.
An entire function $f: \comp \ra \comp$ is said to belong to the \emph{Cartwright's class} $\mathcal{C}$ if the following two conditions hold.
		\begin{enumerate}
			\item $f$ is of exponential type $\sigma$ for some $\sigma\in [0,\infty)$.
   %i.e., there exists $\sigma, B>0$ such that for every $z \in \comp$, we have $|f(z)| \leq Be^{\sigma |z|}$; the infimum of such 
			\item $f$ satisfies the integral condition:
		\begin{align}\label{eq:defn_cartwright}
			\int_{\re} \frac{\log^{+}|f(t)|}{1+t^2} dt < \infty.
			\end{align}
		\end{enumerate}
   Zeros of functions from Cartwright's class have special properties, see \cite[\S 16.1]{levin} and \cite{zeros_cart} for more details.
%For more details on this class of functions and their zeros, see \cite[\S 16.1]{levin}  and \cite{zeros_cart}.

The first step in our proof is to show that $F$ is almost surely in Cartwright's class, and of exponential type at most $A$ (Proposition~\ref{new_lemma} below). This guarantees a tight upper bound on $\log |F|$ (Corollary~\ref{cor:analogue_of_pl}).
The celebrated Jensen's formula (Theorem~\ref{Jensens} below) is then used in order to relate weighted zero counts of $F$ to evaluation and integration of $\log|F|$ around a point. To establish the upper bound we further average this formula over a carefully selected set of points, so that the total weight given to each real zero in $[0,T]$ will be roughly the same. The challenge then lies in providing probabilistic bounds on $\log |F|$ and on the event of existence of suitable points.

The upper bound of Theorem~\ref{thm_undercrowding} is obtained by applying a non-trivial coupling between $F$ and another SGP $G$ (Claim~\ref{claim_undercrowding} below), under which undercrowding for $F$ corresponds to overcrowding for $G$. Applying Theorem~\ref{thm_overcrowding} to $G$ completes the argument. This useful coupling was suggested by Eremenko and Novikov~\cite{EN04}. 

\subsection{Tightness of the bounds}\label{sec:tightness}

The one-sided bounds of Theorems~\ref{thm_overcrowding} and~\ref{thm_undercrowding} often capture the exact probability decay above and below the transition point, up to a constant in the exponent. Here we survey results concerning the conditions for this to hold.

\textbf{Exponential upper bounds.}
Results of Basu, Dembo, Zeitouni and the first author~\cite{bdfz} imply that for SGPs with compactly supported spectral density and absolutely integrable covariance kernel, any (upper or lower) linear deviation of $N_F(T)$ from its mean indeed decays exponentially or faster\footnote{in fact, in \cite{bdfz} this is shown for a slightly larger family of SGPs.}. We conjecture that this property should hold whenever the spectral density is bounded, and has some finite exponential moment (see Conjectures~\ref{conj: exp conc} and~\ref{conj: tail} below).

\textbf{Gaussian lower bounds.} 
In \cite{FFJNN20}, Jay, Nazarov and Nitzan together with the first two authors, showed that the probability of persistence, namely $0$-undercrowding, is of probability at least $\exp(-C T^2)$ if the spectral measure is compactly supported, having a non-trivial absolutely continuous component.
By monotonicity, this implies %exponential-tightness of Theorem~\ref{thm_undercrowding} in estimating
a Gaussian lower bound on $\eta$-undercrowding probability for all $\eta$. %and in particular for $\eta<\tfrac{B}\pi$. 
The third author \cite{lp2} showed an analogous Gaussian lower bound on $\eta$-overcrowding,  for spectral measures with finite moments and non-trivial absolutely continuous component. Hence the first part of both Theorems~\ref{thm_overcrowding} and ~\ref{thm_undercrowding} is tight up to a constant in the exponent.
%for all processes whose spectrum has a non-trivial absolutely continuous component. 

\subsection{Deterministic analogues}\label{sec:det ang}
%We present well-known Deterministic analogues of Theorems~\ref{thm_overcrowding} and~\ref{thm_undercrowding} asymptotic zero density
The number of zeros of Cartwright's class functions in the complex plane inside a ball around the origin, is asymptotically restricted by their exponential growth. This is shown by the following theorem attributed to Levinson and Cartwright, which is analogous to Theorem~\ref{thm_overcrowding}.

%An archetypal example of a function of exponential type $M>0$ in $\cart$ is $f(z) = \sin(Mz)$; the zero set of $f$ is the lattice $\tfrac{\pi}{M}\mathbb{Z}$. 
%The reader is encouraged to keep this simple example in mind in what follows.
%while reading any results we quote about functions in $\cart$. 
	
\begin{Th}[{\cite[Ch. 17, Thm. 1]{levin}}]\label{res_asymp_zeros} Let $f \in \cart$ be  of exponential type $A>0$. Then% we have the following asymptotic zero density:
	\begin{align*}
	  \lim_{R \ra \infty}  \frac{n_{f}^{+}(R)}{R} = \lim_{R \ra \infty}  \frac{n_{f}^{-}(R)}{R} = \frac{A}{\pi},
	    \end{align*}
	    where $n_{f}^{\pm}(R)$  denotes the zero count of $f$ in $\{z \in \comp:|z| <R \text{ and } \text{Re}(z)\gtrless 0\}$.
\end{Th}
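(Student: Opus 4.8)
This is a classical density theorem for the Cartwright class, and the plan is to reconstruct its standard proof in three stages. The first is to determine the growth of $f$ in every direction: combining the integral condition~\eqref{eq:defn_cartwright} with a Phragm\'en--Lindel\"of argument carried out separately in the closed upper and lower half-planes, one shows that the indicator of $f$ equals $h_f(\theta)=A|\sin\theta|$, and---more delicately---that $\log|f|$ is regular enough along the imaginary axis that $\tfrac1t\log|f(it)|\to A$ and $\tfrac1t\log|f(-it)|\to A$ as $t\to+\infty$. That the values of $\log|f|$, which can be very negative near the real zeros of $f$, are nonetheless fully accounted for by the zero set is the substantive part of the Cartwright--Levinson theory; it rests on the canonical factorization of functions of exponential type with a logarithmically integrable trace on $\re$ (cf.~\cite[\S16.1]{levin}). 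Here one tacitly works under the normalization $h_f(\pi/2)=h_f(-\pi/2)=A$, i.e.\ the indicator diagram is the full segment $[-iA,iA]$; this is automatic when $f$ is real on $\re$, as for the process $F$, but it is genuinely needed, since e.g.\ $e^{iAz}\in\cart$ has type $A$ and no zeros at all.

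The second stage relates $n_f^{+}(R)$ to boundary integrals of $\log|f|$ via Carleman's formula for the right half-disc $\{|z|<R,\ \mathrm{Re}\,z>0\}$ (with a small inner radius $\rho$ to avoid the origin)---the conformal transplant of Jensen's formula to a half-plane, in the same spirit as the averaged Jensen's formula used later in this paper. Writing the zeros in the corresponding half-annulus as $z_k=r_k e^{i\theta_k}$, Carleman's identity expresses $\sum_k\lb\tfrac1{r_k}-\tfrac{r_k}{R^2}\rb\cos\theta_k$ as the sum of an imaginary-axis integral $\tfrac1{2\pi}\int_\rho^R\lb\tfrac1{t^2}-\tfrac1{R^2}\rb\log|f(it)f(-it)|\,dt$, a semicircular integral $\tfrac1{\pi R}\int_{-\pi/2}^{\pi/2}\log|f(Re^{i\theta})|\cos\theta\,d\theta$, and an $f$-dependent constant. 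Feeding in the growth information from the first stage, the imaginary-axis integral is asymptotic to $\tfrac{A}{\pi}\int_\rho^R\lb\tfrac1t-\tfrac{t}{R^2}\rb dt\sim\tfrac{A}{\pi}\log R$, while the semicircular term stays bounded, so the left-hand side is $\sim\tfrac{A}{\pi}\log R$. In that sum the weights $\cos\theta_k$ may be replaced by $1$ up to a bounded error, because $\sum_k\tfrac{1-\cos\theta_k}{r_k}\le\sum_k\tfrac{|\sin\theta_k|}{r_k}=\sum_k|\mathrm{Im}(1/z_k)|<\infty$ for every Cartwright-class function (using $1-\cos\theta\le|\sin\theta|$ for $|\theta|\le\pi/2$). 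Hence $\int_\rho^R\lb\tfrac1t-\tfrac{t}{R^2}\rb\,dn_f^{+}(t)\sim\tfrac{A}{\pi}\log R$.

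The third stage is Tauberian: since $n_f^{+}$ is nondecreasing, integrating by parts to recast the weighted integral in terms of $\int_\rho^R n_f^{+}(t)\,t^{-2}\,dt$ and invoking a standard monotone-density Tauberian lemma upgrades the last asymptotics to $n_f^{+}(R)\sim\tfrac{A}{\pi}R$; running the identical argument with $f(-z)$ in place of $f$ yields $n_f^{-}(R)\sim\tfrac{A}{\pi}R$, which is the claim. I expect the main obstacle to be entirely in the first stage: the clean computations of the second and third stages rely on the genuine limits $\tfrac1t\log|f(\pm it)|\to A$ and on the summability $\sum_k|\mathrm{Im}(1/z_k)|<\infty$, both of which are precisely the nontrivial structural consequences of membership in $\cart$; once that input is secured, what remains is bookkeeping with Carleman's formula and a routine Tauberian passage.
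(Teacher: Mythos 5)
The paper does not prove this theorem---it is quoted as a classical result from Levin's book (\cite[Ch.~17, Thm.~1]{levin}), and the authors explicitly note later that ``we do not use Theorems~\ref{res_asymp_zeros} and~\ref{res:spectral_gap} themselves, ideas from the proofs of these results will be used.'' So there is no in-paper proof to compare against. Your sketch is a faithful reconstruction of the standard Cartwright--Levinson argument: determine the indicator $h_f(\theta)=A|\sin\theta|$ and the radial limit $\log|f(\pm it)|\sim At$; feed this into Carleman's formula for the right half-disc; replace the weights $\cos\theta_k$ by $1$ via $\sum_k|\mathrm{Im}(1/z_k)|<\infty$; and finish with a monotone-density Tauberian step. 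The three stages, the role of $\sum|\mathrm{Im}(1/z_k)|<\infty$, and the identification of the first stage as the substantive input are all accurate.

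Your remark that the statement as printed is not quite self-contained is correct and worth flagging. For $f\in\cart$ of exponential type $A$ the indicator diagram is always a vertical segment $[-i\sigma_-,\,i\sigma_+]$ with $\max(\sigma_+,\sigma_-)=A$, but the zero density in each half-plane equals $(\sigma_++\sigma_-)/(2\pi)$, not $A/\pi$, unless $\sigma_+=\sigma_-=A$. The example $f(z)=e^{iAz}$ you give is exactly right: it lies in $\cart$, has type $A$, and has no zeros, so the conclusion fails. The theorem as quoted is therefore implicitly invoking the symmetric case $h_f(\pi/2)=h_f(-\pi/2)=A$, which holds automatically when $f$ is real on $\re$ (as for the SGP $F$) since then $|f(iy)|=|f(-iy)|$. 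In the paper's application this normalization is always satisfied, so no error propagates, but as a freestanding statement the hypothesis is missing---a useful catch.

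Two small points of caution, neither fatal to the sketch. First, the pointwise limit $t^{-1}\log|f(\pm it)|\to A$ requires an argument (zeros of a Cartwright function can wander near the imaginary axis, so one must control $\log^-|f(it)|$); the cleanest route is to prove the averaged version that actually enters Carleman's identity, which is what the Cartwright--Levinson theory supplies. Second, the final Tauberian step needs more than the single asymptotic $\int_\rho^R n_f^+(t)\,t^{-2}\,dt\sim\frac{A}{\pi}\log R$; one should run Carleman's formula for a pair of radii $R$ and $\lambda R$ and use monotonicity of $n_f^+$ to squeeze $n_f^+(R)/R$ between quantities tending to $A/\pi$. These are the usual fine points of the proof in Levin or Koosis and do not affect the correctness of your plan.
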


This has been used by Eremenko and Novikov to study the asymptotic zero density of functions with a spectral gap at the origin,
a result analogous to Theorem~\ref{thm_undercrowding}.
\begin{Th}[{\cite[Prop. 1]{EN04}}]\label{res:spectral_gap} Let $f: \re \ra \re$ be a continuous function whose spectrum is supported on $\pm[B,A]$, for some $A,B>0$, for which $\int_{\re} \frac{\log^{+}|f(t)|}{1+t^2} dt < \infty$.
Then
\begin{align*}
\liminf_{R \ra \infty} \frac{N_{f}(R)}{R} \geq \frac{B}{\pi}.
\end{align*}
	\end{Th}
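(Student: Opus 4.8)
The plan is to follow the approach of Eremenko and Novikov: isolate the analytic (positive-frequency) half of $f$, translate its spectrum down to the origin, and control the resulting function via the boundary theory of bounded-type analytic functions in a half-plane; the number of real zeros is then read off from a mean-motion argument for the argument of that function.

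\textbf{Reduction and set-up.} Since $\operatorname{sprt}(\hat f)\subseteq[-A,-B]\cup[B,A]$ is compact, $f$ is entire of exponential type at most $A$ (and tempered on $\re$), and the integral condition places $f$ in $\cart$; we may assume $f\not\equiv 0$. Let $G$ be the inverse Fourier transform of $\hat f\cdot\mathbbm{1}_{[B,A]}$, so that $f=2\operatorname{Re}G$ on $\re$ and $G$ is entire; estimating $|e^{-i\lambda z}|=e^{\lambda\operatorname{Im}z}$ for $\lambda\in[B,A]$ shows $|G(z)|\lesssim(1+|z|)^{N}e^{B\operatorname{Im}z}$ on the lower half-plane $\Pi:=\{\operatorname{Im}z<0\}$. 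Set $H(z):=e^{iBz}G(z)$. Then $H$ is entire of exponential type at most $A-B$, with $\operatorname{sprt}(\hat H)\subseteq[0,A-B]$ and $|H|=|G|$ on $\re$ (so $H\in\cart$), and $H$ is of bounded type and of exponential type $0$ in $\Pi$.

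\textbf{The key estimate.} I claim
\[
\arg H(R)\le o(R)\qquad(R\to+\infty),
\]
for a fixed continuous branch of $\arg H$ on $\re$ (real zeros of $H$ are also zeros of $f$ and can be counted separately). I would deduce this from the Nevanlinna factorization of $H$ in $\Pi$: write $H=B_{\flat}H_{0}$, with $B_{\flat}$ the Blaschke product over the zeros of $H$ in $\Pi$ and $H_{0}$ zero-free of bounded type in $\Pi$. Each elementary factor $\frac{z-a}{z-\bar a}$ (with $\operatorname{Im}a<0$) satisfies $\frac{d}{dt}\arg\frac{t-a}{t-\bar a}=2\operatorname{Im}\frac{1}{t-a}<0$ on $\re$, so $\arg B_{\flat}$ is non-increasing, whence $\arg B_{\flat}(R)\le\arg B_{\flat}(0)$. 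For $H_{0}$: its singular-inner part is $e^{-i\beta z}$ with $\beta\ge 0$ (exponential type $0$ in $\Pi$ forcing $\beta\ge 0$), contributing boundary argument $-\beta t$; its outer part contributes the conjugate function of $\log|H_{0}|=\log|H|\in L^{1}(\tfrac{dt}{1+t^{2}})$, which grows sublinearly. Hence $\arg H_{0}(R)=-\beta R+o(R)$ and $\arg H(R)\le o(R)$. (The constraint $\operatorname{sprt}(\hat H)\subseteq[0,A-B]$ is consistent with this, as it already excludes a factor $e^{+i\alpha z}$ whose boundary argument $\alpha t$ would grow.)

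\textbf{Conclusion and main obstacle.} On $\re$, $f(t)=2|H(t)|\cos\theta(t)$ with $\theta(t):=\arg H(t)-Bt$. If $\theta$ decreases by exactly $\pi$ over an interval, then $\cos\theta$ takes opposite signs at the endpoints (unless $|H|$ vanishes in between), so $f$ has a zero there; partitioning $(0,R)$ at the first passage times of $\theta$ through the levels $\theta(0)-\pi j$ for $1\le j\le\lfloor|\theta(R)-\theta(0)|/\pi\rfloor$ gives $N_f(R)\ge\frac1\pi|\theta(R)-\theta(0)|-O(1)$. Since $\theta(R)=-BR+\arg H(R)=-BR+o(R)$, we obtain $N_f(R)\ge\frac{B}{\pi}R-o(R)$, hence $\liminf_{R\to\infty}N_f(R)/R\ge B/\pi$. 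The step I expect to be the main obstacle is the displayed estimate $\arg H(R)\le o(R)$: it is the only point that uses both the spectral gap and the Cartwright condition, and it requires care with the half-plane Nevanlinna factorization — in particular with the Blaschke contribution and with the sign and sublinearity of the singular/outer part. A secondary technicality is making the decomposition $f=2\operatorname{Re}G$ and the membership $H\in\cart$ fully rigorous when $f$ is only assumed continuous of Cartwright class.
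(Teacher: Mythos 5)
The paper does not prove this statement; it cites it from Eremenko--Novikov (\cite{EN04}) and even remarks that extending it to merely continuous $f$ requires generalising the Fourier transform. So there is no internal proof to compare against, and I am judging your reconstruction on its own merits.

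Your overall strategy is the right one and does match \cite{EN04} in spirit: split $f=2\operatorname{Re}G$ into the positive-frequency part, translate the spectrum of $G$ to $[0,A-B]$ via $H=e^{iBz}G$, observe that $H$ is of bounded type and zero mean type in the lower half-plane, and extract $N_f(R)\ge\frac1\pi|\theta(R)-\theta(0)|-O(1)$ from the level-crossings of $\theta=\arg H-Bt$. The Blaschke computation $\tfrac{d}{dt}\arg\tfrac{t-a}{t-\bar a}=2\operatorname{Im}\tfrac{1}{t-a}<0$ and the sign $\beta\ge 0$ of the singular exponent are both correct, and the identity $f(t)=2|H(t)|\cos\theta(t)$ is verified.

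The genuine gap is in the step you yourself flag: the assertion that the outer factor's boundary argument --- the conjugate function of $\log|H|$ --- is $o(R)$. That is \emph{not} a consequence of $\log|H|\in L^1\bigl(\tfrac{dt}{1+t^2}\bigr)$ alone. One can construct $u\in L^1\bigl(\tfrac{dt}{1+t^2}\bigr)$ whose conjugate has spikes of order $x\log x$ at a sparse sequence of points (e.g.\ tall narrow bumps of height $\sim 2^n$ and width $\sim 2^{-n}$ near $t=2^n$), so the claimed sublinearity fails in that generality. To make it work for $u=\log|H|$ one must use that $H$ is of Cartwright class and finite exponential type, which gives $\log|H|$ the extra regularity that underlies the completely-regular-growth / Ahlfors--Heins theory; equivalently, one can route around the Hilbert-transform estimate entirely by applying the Levinson--Cartwright density theorem (recorded in the paper as Theorem~\ref{res_asymp_zeros}) together with the argument principle on a large half-disc, which is essentially what Eremenko and Novikov do. A secondary gap, also flagged by you, is that for $f$ merely continuous of Cartwright class the Fourier transform $\hat f$ and the splitting $f=2\operatorname{Re}G$ (and hence the bound $|G(z)|\lesssim(1+|z|)^N e^{B\operatorname{Im}z}$) are not immediate; this is the ``generalised Fourier transform'' point the paper alludes to. As written, the proposal is a correct roadmap but not a complete proof.
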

N.b. that for Theorem~\ref{res:spectral_gap} to hold in such generality, one needs to generalise the notion of a Fourier transform, see \cite{EN04} for more details.

For an SGP $F$ whose spectral measure is supported on $\pm[B,A]$, these results tell us that, asymptotically, $\tfrac{N_F(T)}{T}\in [\frac{B}{\pi},\frac{A}{\pi}]$. The purpose of this paper is to provide a probabilistic estimate for the rate of convergence. While we do not use Theorems~\ref{res_asymp_zeros} and~\ref{res:spectral_gap} themselves, ideas from the proofs of these results will be used in proving Theorems \ref{thm_overcrowding} and \ref{thm_undercrowding} respectively. 

%  \tb{Discussion: Kac-Rice, variance, higher moments, exponential concentration, overcrowding results including matching lower bound. }

%that the probability of $\eta$-overcrowding is bounded from below by $\exp(-C_\eta T^2)$ 

%\section{Discussion}\label{sec:2}

\subsection
%{The context of our work in the broader study of zeros of SGPs} 
{Background}
\label{sec:2.1}

For many years there has been little progress in studying large deviations of the zero count of SGPs.
It has been anticipated that, similar to many classical models for random sums, the probability of a large deviation in $N(T)$ should be asymptotically exponential in $T$.
Until recently, such exponential concentration was not established even for a single non-trivial example of SGPs (as
mentioned in lecture notes by Tsirelson~\cite{Tsirel} and Sodin~\cite{ms}); even though such results were obtained
for related models, 
%endowed with additional regularity by using ad-hoc methods
including complex zeros of the planar and hyperbolic Gaussian analytic functions~\cite{Krishnapur06, ST05} and nodal component count of Gaussian Laplace eigenfunctions~\cite{NS09, lp1,Rozenshein}.
    Some exponential bounds on deviations were also obtained for zeros of high-degree Kostlan polynomials and other Gaussian models on manifolds~\cite{GW11}, as well as counts of nodal components and  other topological events in high dimensional smooth Gaussian fields with fast covariance decay ~\cite{BMR20}.

The extremely fast decay of $\eta$-overcrowding for sufficiently large values of $\eta$, has been pointed out in \cite{lp2}. There, larger than linear overcrowding has also been considered, and in the case of compactly supported spectral measures with absolutely continuous component, it was shown that $\log \p(N_F(T) \geq n) \asymp   -  n^2 \log\lb n/T \rb$ for $n\ge CT$, with some constant $C>0$.
%The methods should also provide tight estimates for the cases of non-compactly supported measures, but this remains to be done. 

%REF of Zelditch? Gayet?

The particular case of $0$-undercrowding, also known as the \emph{persistence event}, gained substantial attention by both physicists~\cite{BMS13} and mathematicians~\cite{AS15}. In 1962 Slepian introduced his famous comparison inequality in order to bound these events~\cite{Slepian62}, but this was not enough to determine the rate of decay for many examples of interest (see e.g.~\cite{DHZ96}). 
It was only in the last decade that major advances were made in understanding persistence and estimating its probability (e.g.~\cite{AMZ21,  DM15, DM17, KK16}). Using new spectral techniques, the decay rate of persistence probabilities was shown to depend mainly on the behaviour of the spectral measure near the origin~\cite{FFN21}, with asymptotically exponential behaviour corresponding to existence of a spectral density at the origin~\cite{FFM22}. The methods developed in these works play a major role in the proof of Proposition~\ref{prop:lb}.

\subsection{Future work}\label{sec:fut}
%Our work demonstrates a transition between two regimes in overcrowding and undercrowding probabilities -- an exponential regime and a Gaussian regime. Each of these regimes, however, and the transition window between them, are so far not so well understood. Here we highlight some questions which the current theory is unable to resolve and discuss directions for settling them.
Here we survey several directions for future work concerning overcrowding and undercrowding events.

\textbf{Transition window.}
While the results of Theorems~\ref{thm_overcrowding} and~\ref{thm_undercrowding} demonstrate the existence of a phase transition in the overcrowding and undercrowding probabilities, they leave to be desired in understanding the fine nature of this transition and recovering the transition window. 
Theorem~\ref{thm_overcrowding} implies that for $\beta \in (\tfrac 1 2, 1]$ we have
    $$\p \lb N(T) \geq \frac{A}{\pi} T + T^{\beta}\rb\leq \exp({-T^{4\beta - 2-\ep}})$$
for any $\ep>0$, while Theorem~\ref{thm_undercrowding} implies an analogous bound for undercrowding.
For $\beta\in [\tfrac 1 2,\tfrac 3 4]$ this bound is worse than the bound obtained in \cite{bdfz}, and is unlikely to be tight. Our first question is concerned with improving this bound and finding a matching lower bound. %for this event.

\begin{Problem}
Obtain tight upper and lower bounds for $\p \lb N(T) \geq \frac{A}{\pi} T + T^{\beta}\rb$ for $\beta\in[0,1]$.
\end{Problem}

\textbf{Undercrowding for non-compactly supported spectrum.}
It seems natural that sub-Gaussian undercrowding probability decay should occur in all processes demonstrating a spectral gap, and that the condition of compact spectral support is not needed. This restriction in Theorem~\ref{thm_undercrowding} stems only from our use of Theorem~\ref{thm_overcrowding} in obtaining it.

\begin{Problem}
Remove the condition that $\mu$ is compactly supported in Theorem~\ref{thm_undercrowding}.
\end{Problem}

\smallskip

\textbf{General exponential upper bounds.}
Proposition~\ref{prop:lb} indicates that the exponential upper bounds of \cite{bdfz} do not hold for  spectral measures with unbounded spectral density, or a singular spectral component, away from $\pm \pi \E[N_F(1)]$. Exponential upper bounds also require the measure to have some exponential moment, as otherwise $N_F(T)$ cannot decay exponentially. We conjecture that these are the true obstructions for such a bound. 

\begin{Conj}\label{conj: exp conc}
If $\mu$ has bounded density, and some finite exponential moment, then for any $\alpha>0$ there exists $c>0$ such that \[\P(|N_F(T)-\E N_F(T)| \ge \alpha T) \le e^{-cT}.\]
%For any SGP $F$ with bounded spectral density, the number of zeroes $N_F(T)$ demonstrates exponential concentration about its mean, as per \cite{bdfz}.
\end{Conj}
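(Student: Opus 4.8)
The plan is to treat the two sides of the bound, the \emph{overcrowding} tail $\P\big(N_F(T)\ge\E N_F(T)+\alpha T\big)$ and the \emph{undercrowding} tail $\P\big(N_F(T)\le\E N_F(T)-\alpha T\big)$, by a single scheme: first extract two ``local'' consequences of the hypotheses, then pass to a block decomposition and a Chernoff bound. \emph{Ingredient 1 (strip analyticity).} The assumption $\int e^{\delta_0|\lambda|}\,d\mu(\lambda)<\infty$ forces $F$ to extend a.s.\ to an analytic Gaussian field on a strip $\{|\Im z|\le h_0\}$ with $h_0<\tfrac{\delta_0}{2}$ and $\E|F(t+iy)|^2=\int e^{2\lambda y}\,d\mu(\lambda)$ bounded there; this plays the role of the exponential-type structure (Proposition~\ref{new_lemma}) used in Section~\ref{sec:over} in the compactly supported case. \emph{Ingredient 2 (finite-window exponential moment).} There are $s_0,L_0>0$ with $\E\big[e^{s_0 N_F(L_0)}\big]<\infty$. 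I would get this by running the Jensen/argument-principle estimate of Section~\ref{sec:over} on a single window of length $L_0$: $N_F(L_0)$ is at most the number of complex zeros of the analytic extension in $[0,L_0]\times[-h_0,h_0]$, which via Jensen's formula averaged over a short net of points where $|F|\ge\epsilon_0$ is controlled by $\sup\log^+|F|$ on a fixed neighbourhood (a Borell--TIS Gaussian tail, so its exponential has finite mean) together with the exponentially rare event that no such net point exists -- here boundedness of the density \emph{and} the exponential moment are both essential, precisely to exclude the cheap high-frequency ``pure wave'' imitations that Proposition~\ref{prop:lb} produces for singular or fast-growing spectra.

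Granting both ingredients, fix a block length $L$ and buffer length $g$ and cut $[0,T]$ into $\approx T/(L+g)$ blocks of length $L$ separated by buffers of length $g$; write $n_j=N_F(\text{$j$-th block})$, so $N_F(T)=\sum_j n_j+\sum_j N_F(\text{$j$-th buffer})$, both sums over families of pairwise $g$-separated intervals. For overcrowding, on the event that the buffer zeros number $\le\tfrac\alpha2 T$ (of probability $\ge1-e^{-cT}$ by the same Chernoff bound applied to the buffers, once $g$ is a small enough fraction of $L$) one has $\sum_j n_j\ge(\tfrac\gamma\pi+\tfrac\alpha2)T$, and a Chernoff bound over the blocks gives $\P\big(\sum_j n_j\ge(\tfrac\gamma\pi+\tfrac\alpha2)T\big)\le\big[e^{-s(\gamma/\pi+\alpha/2)(L+g)}\,\E e^{sn_0}\big]^{\lfloor T/(L+g)\rfloor}$ up to a decorrelation factor; since $\tfrac{d}{ds}\log\E e^{sn_0}\big|_{s=0}=\E n_0=\tfrac{\gamma L}\pi<(\tfrac\gamma\pi+\tfrac\alpha2)(L+g)$, the bracket is $<1$ for all small $s>0$, giving $e^{-cT}$ with $c=c(\alpha)>0$. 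Undercrowding is symmetric and needs no moment input: from $N_F(T)\ge\sum_j n_j$ and $\E e^{-sn_0}\le1$, $\P\big(N_F(T)\le(\tfrac\gamma\pi-\alpha)T\big)\le\big[e^{s(\gamma/\pi-\alpha)(L+g)}\,\E e^{-sn_0}\big]^{\lfloor T/(L+g)\rfloor}$ up to a decorrelation factor, and the bracket is $<1$ for small $s>0$ once $(\tfrac\gamma\pi-\alpha)(L+g)<\tfrac{\gamma L}\pi$, i.e.\ $g<\tfrac{\alpha\pi}{\gamma-\alpha\pi}L$ -- for $\alpha$ small this forces $g\ll L$, so $L$ must be taken large to leave room for the decorrelation step.

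The missing link in both tails is the \emph{decorrelation factor}: Chernoff requires $\E\exp\big(\pm s\sum_j n_j\big)$ to factor over the $g$-separated blocks up to a $(1+o(1))$ error per block. In~\cite{bdfz} the analogous estimate rests on absolute integrability of the covariance kernel $k$, whereas here boundedness of the spectral density only gives $\rho\in L^1\cap L^\infty$, hence $k=\widehat\rho\in C_0$ but not $k\in L^1$: correlations across a buffer decay but are not summable, and the prediction errors accrued when the blocks are revealed successively must be controlled by a new argument. I expect this to be the crux -- most plausibly a spectral/prediction-theoretic estimate genuinely exploiting the $L^\infty$ bound on $\rho$ rather than a crude mixing rate. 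The secondary difficulty is Ingredient~2, namely carrying the complex-analytic overcrowding estimate of Section~\ref{sec:over} through with strip analyticity in place of compact support.
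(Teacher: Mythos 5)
The statement you are trying to prove is explicitly labelled a \emph{conjecture} in the paper (Conjecture~\ref{conj: exp conc}). The authors do not prove it; they state it as an open problem after observing, via Proposition~\ref{prop:lb}, that unbounded density or a singular spectral component away from $\pm\pi\E[N_F(1)]$ obstructs exponential concentration, and that a finite exponential moment is necessary as well. So there is no ``paper proof'' to compare against.

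As for your proposal on its own terms: the overall block-and-Chernoff strategy is the natural one, and the two ingredients you isolate are exactly the right ones to look for, but you have not produced a proof. You yourself flag the gap and correctly locate it. The decorrelation step is the crux and is genuinely unresolved: boundedness of the spectral density gives $k=\widehat\rho\in C_0\cap L^2$ but not $k\in L^1$, and without summable covariance decay the prediction errors accrued when blocks are revealed sequentially need not be small per block, so the bracket in your Chernoff bound need not factor up to $(1+o(1))$. The argument in \cite{bdfz} that you are trying to emulate genuinely uses $k\in L^1$ (and compact spectral support); replacing it with an estimate that exploits only $\rho\in L^\infty$ is the open problem, not a routine adaptation. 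A secondary issue you also acknowledge: Ingredient~2 (a finite exponential moment for $N_F$ on a fixed window) is plausible via Jensen on a strip, but the Section~\ref{sec:over} machinery is built on entire functions of exponential type coming from compact spectral support; with only strip analyticity, the small-ball input (Lemma~\ref{lem:smallballprob}) and the control of $\log^+|F|$ in the strip both need to be redone, and this has not been carried out. Until both of these are filled in, what you have is a plan pointing at the two lemmas one would need, not a proof of the conjecture.
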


In fact, we conjecture the following stronger statement. 
%In fact, we conjecture that only the edge of the spectrum actually affects large deviations in $N_f(T)$. Namely, 
\begin{Conj}\label{conj: tail}
    If $a>\pi \E[N_F(1)]$ is such that $\mu|_{[a,\infty)}$ has bounded density and some finite exponential moment, then there exists $c>0$ such that $\P\left(N_F(T)>\frac{a}{\pi}T\right)\le e^{-cT}$. 
%for all $a>A$, there exists $c$ such that 
\end{Conj}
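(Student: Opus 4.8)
When $\mu$ is compactly supported the tail hypothesis is vacuous; if moreover $a$ exceeds the exponential type of $F$, then by the Levinson--Cartwright zero-density asymptotics (cf.\ Theorem~\ref{res_asymp_zeros}) one has almost surely $N_F(T)<\tfrac a\pi T$ for all large $T$ and the conclusion is trivial, so the substance of the conjecture lies in the case where $\sprt(\mu)$ is unbounded. There the natural route is to push the complex-analytic strategy behind Theorem~\ref{thm_overcrowding} --- Jensen's formula for the analytic extension of $F$, averaged over a carefully chosen family of base points --- into this wider spectral setting. The starting point is that the exponential-moment hypothesis on $\mu|_{[a,\infty)}$, together with finiteness of $\mu$ and boundedness of $\sprt(\mu|_{[-a,a]})$, forces $\int_\re e^{\theta|\lambda|}\,d\mu(\lambda)<\infty$ for some $\theta>0$; hence $F$ admits, almost surely, an analytic continuation to a fixed horizontal strip $S_\delta=\{\,|\operatorname{Im}z|\le\delta\,\}$ on which $\sup_{z\in S_\delta}\E|F(z)|^2<\infty$, and in fact $\E|F(x+iy)|^2=\int e^{2\lambda y}\,d\mu(\lambda)\le Ce^{2a|y|}$ on $S_\delta$. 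Combining this with the analogous bound for $F'$ on a slightly narrower strip and a net argument, one obtains with probability at least $1-e^{-cT}$ a uniform estimate of the form $\log|F(z)|\le a\,|\operatorname{Im}z|+o(T)$ on $[0,T]\times[-\delta,\delta]$: on this window $F$ behaves like an entire function of exponential type at most $a$, up to an $o(T)$ fluctuation.

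To isolate the sharp constant one would use the decomposition $F=G+H$ into independent stationary Gaussian processes with spectral measures $\mu|_{[-a,a]}$ and $\mu|_{\{|\lambda|>a\}}$. Here $G$ is almost surely entire of exponential type at most $a$ and lies in Cartwright's class, so the Phragm\'en--Lindel\"of-type estimate $\log|G(z)|\le a\,|\operatorname{Im}z|+o(|z|)$ (the deterministic input behind Corollary~\ref{cor:analogue_of_pl}) applies to it verbatim, while $H$ carries the ``nice'' high-frequency spectrum. The hypothesis $a>\pi\E[N_F(1)]$ now reads $a^2>\int\lambda^2\,d\mu=\gamma_G^2+\gamma_H^2$, which caps the high-frequency energy $\gamma_H^2=\int_{|\lambda|>a}\lambda^2\,d\mu$; this is the mechanism preventing $H$ from inflating the zero count past $\tfrac a\pi T$ even though $H$ alone carries $\asymp\tfrac{\gamma_H}\pi T$ zeros. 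Concretely, I would run the averaged Jensen estimate underlying Theorem~\ref{thm_overcrowding} on $F$ itself, using: (i) the above bound on $\log|G|$ for the band-limited part; (ii) a bound, uniform in the base point $z_0$, on the circle averages $\tfrac1{2\pi}\int_0^{2\pi}\log|H(z_0+Re^{i\phi})|\,d\phi$ --- by concavity of the logarithm these are at most $\tfrac12\log\bigl(\tfrac1{2\pi}\int_0^{2\pi}|H(z_0+Re^{i\phi})|^2\,d\phi\bigr)$, whose expectation equals $\tfrac12\log\int_{|\lambda|>a}I_0(2\lambda R)\,e^{2\lambda\operatorname{Im}z_0}\,d\mu(\lambda)$ and is finite and uniformly bounded once $R<\theta/2$, so a union bound over the $O(T)$ base points confines this control to an event of probability $1-e^{-cT}$; and (iii) a bound on $\sum_{z_0}\log^{-}|F(z_0)|$ over those base points, for which I expect one must invoke the spectral concentration machinery of \cite{FFN21,FFM22} used in the proof of Proposition~\ref{prop:lb}, since an atomic or singular component of $\mu|_{[-a,a]}$ makes the values $F(z_0)$ only weakly --- and in general not summably --- decorrelated.

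The main obstacle, exactly as in Theorem~\ref{thm_overcrowding}, is extracting the sharp constant $\tfrac a\pi$ rather than a fixed multiple of it: a crude disc-Jensen bound overcounts off-axis complex zeros by roughly a factor of two and picks up a spurious $O(T)$ from the $H$-circle-averages and from the $\log^{-}|F(z_0)|$ terms, whereas the hypothesis only supplies the slack $a-\pi\E[N_F(1)]$. Recovering the optimal constant forces the delicate choice of base points and weights of Theorem~\ref{thm_overcrowding} --- arranged so that every real zero in $[0,T]$ receives total weight close to $1$ while off-axis zeros and the stray error terms cancel --- and the new difficulty is to check that this choice survives the presence of the non-band-limited tail $H$; in particular that the circle-average control in (ii) is strong enough, uniformly in $z_0$, to be absorbed into the weighting. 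A secondary issue is upgrading the quantitative bound $\P\bigl(N_F(T)\ge\tfrac a\pi T+\epsilon T\bigr)\le e^{-c_\epsilon T}$, which this scheme naturally yields for each fixed $\epsilon>0$, to the exact threshold in the statement; that refinement should follow from the strict inequality $a>\pi\E[N_F(1)]$.
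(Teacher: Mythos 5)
This statement is posed in the paper as an open conjecture (Conjecture~\ref{conj: tail}); no proof is given, so there is nothing to compare your sketch against. Judged on its own terms, the sketch has a structural obstacle that its hedged phrasing gestures at but does not resolve. The averaged Jensen argument of Theorem~\ref{thm_overcrowding} extracts the sharp constant $\tfrac{A}{\pi}$ precisely because the disc radius $r=\ep T$ tends to infinity: each real zero receives total weight $\int_\Delta^r t^{-1}\big(\lceil 2t/\Delta\rceil-2\big)\,dt\approx 2r/\Delta$, the per-point circle average is $\approx\tfrac{2}{\pi}Ar+O(\log T)$, and after dividing by the weight $2r/\Delta$ the $O(\log T)$ and $\log^-|F(x_k)|$ errors shrink to $O(\ep)\cdot T$, tunable to zero. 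When $\sprt(\mu)$ is unbounded and $\mu$ has only a finite exponential moment, $F$ extends analytically only to a strip of some fixed half-width $\delta$, so one is forced to take $r\le\delta$ bounded. Dividing the error contributions (the $\log^-|F(x_k)|$ term, the $H$-circle-average term, and any $\log T$-size fluctuation in $\log|G|$) by the now-fixed weight $2r/\Delta$ leaves a slack of order $T$ with a constant that does not vanish: the scheme yields at best $N_F(T)\lesssim\big(\tfrac{a}{\pi}+D\big)T$ for some fixed $D>0$, not the claimed threshold $\tfrac{a}{\pi}T$. You acknowledge the ``spurious $O(T)$'' and suggest the paper's ``delicate choice of base points and weights'' will absorb it, but that choice \emph{is} the $r\to\infty$ scaling the strip forbids, and you do not propose a substitute. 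Nor is it clear how the slack $a-\pi\E[N_F(1)]$ helps, since $D$ depends on the entire tail $\mu|_{[a,\infty)}$ and is not tunable.

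Two secondary points. First, the compactly supported case is not trivial: Levinson--Cartwright gives only almost-sure convergence of $N_F(T)/T$, not a uniform exponential tail, and the regime $\sprt(\mu)\subseteq[-A,A]$ with $\pi\E[N_F(1)]<a<A$ falls squarely inside the conjecture but outside Theorem~\ref{thm_overcrowding} (which only controls thresholds above $\tfrac{A}{\pi}$) and outside the cited results of \cite{bdfz} unless one also assumes a compactly supported spectral density with integrable covariance. Second, your argument never invokes the hypothesis that $\mu|_{[a,\infty)}$ has bounded density, yet Proposition~\ref{prop:lb} shows this hypothesis is necessary (an unbounded density or singular part near any $X>\pi\E[N_F(1)]$ defeats every exponential upper bound); a credible strategy should make visible where it enters.
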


A similar property should hold for undercrowding.

\smallskip
\textbf{Exponents and conditional behaviour.}
Lastly, we state two additional problems concerned with exact overcrowding and undercrowding exponents and conditional behaviour. 
\begin{Problem}
    Describe conditions under which the limit $\displaystyle\lim_{T\to\infty} \tfrac 1 T\log \p \left( N_F(T) \ge \eta T\right)$ exists.
\end{Problem}
\begin{Problem}
Describe the process $F$ conditioned on the event $\{N_F(T)\ge \eta T\}$. In particular, describe the empirical power spectrum $\displaystyle\lim_{T\to\infty}    \frac{1}{T}\left|\mathcal F^{-1}\left[F\mathbbm{1}_{[0,T]}\right]\right|^2$, where $F$ is the conditioned process.
\end{Problem}
Analogous problems could be stated concerning undercrowding events. A possible approach to these problems may be to refine known large deviation principles for empirical measures of SGPs, established by Donsker-Varadhan~\cite{DV85} and Bryc-Dembo~\cite{BD95}, whose current conditions are too restrictive to be applied directly.

%%%%%%%%%%%%%%%%
% Section 2    %
%%%%%%%%%%%%%%%%

%we discuss non-stochastic analogs of Theorems~\ref{thm_overcrowding} and \ref{thm_undercrowding} which provide intuitive explanation for the transition points.

%Throughout this section, $F$ will denote a centered SGP on $\re$, whose spectral measure $\mu$ is compactly supported. $A>0$ will denote the right edge of sprt$(\mu)$, and hence sprt$(\mu) \subset [-A,A]$. $B\geq 0$ will denote the largest number such that $\mu((-B,B))=0$. 

 %Our main results, namely Theorems \ref{thm_overcrowding} and \ref{thm_undercrowding}, concern transition in the behaviour of the probabilities $\p(N_F(T) > \alpha T)$ and $\p(N_F(T) < \beta T)$ respectively, as $\alpha$ and $\beta$ vary. 
 %In Theorem \ref{thm_overcrowding}, we show that a transition occurs at $\alpha = A/\pi$. In Theorem \ref{thm_undercrowding}, we show that if $B>0$, then
 %a transition occurs at $\beta = B/\pi$. 
  %wish to provide some heuristics 
 %for why we can expect such a transition at $A/\pi$ and $B/\pi$. 

 \section{Preliminaries}\label{sec: prelim}
	
	\subsection{Theory of entire functions}\label{subs: perlim: entire}

 We require a few classical results about entire functions. 
For $z\in \comp$ and $r>0$, denote by $B(z,r) :=\{w\in\comp: \ |w-z|<r\}\subseteq \comp$ the disc of radius $r$ around $z$, and by
$n_{f}(z,r)$ the number of zeros, counted with multiplicity, of an entire function $f: \comp \ra \comp$
in that disc. 
The following is the famous zero-counting formula by Jensen. %We first introduce some standard notation.

%When there is no ambiguity of the function under consideration, we use $n(z,r)$ in place of $n_{f}(z,r)$.

		\begin{Th}[Jensen~{\cite[\S 2.3]{levin}}] \label{Jensens} Let $f: \mathbb{C} \ra \comp$ be an entire function such that $f(0) \neq 0$. 
  %Let $n(t)$ denote the number of zeros (counted with multiplicity) of $f$ in the disc $B(0,t)$. 
  Suppose that $f$ has no zeros on $\partial B(0,R)$. Then
		\begin{align*}
		\int_{0}^{R} \frac{n_f(0,t)}{t} dt = \frac{1}{2\pi}\int_{0}^{2\pi} \log|f(Re^{i\theta})| d\theta - \log|f(0)|.
		\end{align*}
	\end{Th}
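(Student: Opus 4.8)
The plan is to prove Jensen's formula by the classical device of cancelling the zeros of $f$ inside $B(0,R)$ with Blaschke-type factors and then invoking the mean-value property of harmonic functions. Let $a_1,\dots,a_n$ be the zeros of $f$ in $B(0,R)$, listed with multiplicity; this list is finite since $f\not\equiv 0$ (because $f(0)\neq 0$) forces the zeros to be isolated, and none of the $a_k$ lies on $\partial B(0,R)$ by hypothesis. I would introduce
\[
g(z)=f(z)\prod_{k=1}^{n}\frac{R^2-\ovl{a_k}z}{R(z-a_k)}.
\]
The first task is to check that $g$ is holomorphic and zero-free on an open neighbourhood of $\ovl{B(0,R)}$: each simple zero of a denominator $z-a_k$ is absorbed by a zero of $f$; the poles $z=R^2/\ovl{a_k}$ of the factors have modulus $R^2/|a_k|>R$ and so lie outside the closed disc; and, using isolation of the zeros of $f$ together with the hypothesis that none sits on $\partial B(0,R)$, there is $\rho>R$ with $f$ (hence $g$) zero-free on the annulus $R\le|z|<\rho$. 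Consequently $\log|g|$ is harmonic on $B(0,\rho)\supseteq\ovl{B(0,R)}$.

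Next I would record that $|g(z)|=|f(z)|$ for $|z|=R$. This reduces to the elementary identity $|R^2-\ovl{a_k}z|^2=R^2|z-a_k|^2$, valid whenever $|z|^2=R^2$, which follows by expanding both sides and substituting $|z|^2=R^2$; it shows that each Blaschke factor has unit modulus on the circle. Applying the mean-value property to the harmonic function $\log|g|$ then gives
\[
\log|g(0)|=\frac{1}{2\pi}\int_0^{2\pi}\log|g(Re^{i\theta})|\,d\theta=\frac{1}{2\pi}\int_0^{2\pi}\log|f(Re^{i\theta})|\,d\theta .
\]
On the other hand, evaluating the product at $z=0$ yields $g(0)=f(0)\prod_{k=1}^{n}\bigl(R/(-a_k)\bigr)$, and hence $\log|g(0)|=\log|f(0)|+\sum_{k=1}^{n}\log(R/|a_k|)$.

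It remains to identify $\sum_{k=1}^{n}\log(R/|a_k|)$ with $\int_0^R t^{-1}n_f(0,t)\,dt$. Since $n_f(0,t)=\#\{k:|a_k|<t\}$, Tonelli's theorem allows interchanging the sum and the integral:
\[
\int_0^R\frac{n_f(0,t)}{t}\,dt=\sum_{k=1}^{n}\int_{|a_k|}^{R}\frac{dt}{t}=\sum_{k=1}^{n}\log\frac{R}{|a_k|}.
\]
Combining this with the two displays of the previous paragraph and rearranging yields the stated formula. There is no genuine obstacle here, as this is a classical result; the only points needing care are the holomorphy and zero-freeness of $g$ in a neighbourhood of $\partial B(0,R)$ — which is precisely where the hypothesis that $f$ has no zeros on $\partial B(0,R)$ is used — and the boundary-modulus computation, so the proof is essentially bookkeeping around the mean-value property.
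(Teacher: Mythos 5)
Your proof is correct and is the classical Blaschke--factor argument: kill the interior zeros with factors that have unit modulus on $\partial B(0,R)$, apply the mean-value property to $\log|g|$ on a slightly larger disc, and unfold $\int_0^R t^{-1}n_f(0,t)\,dt$ into $\sum_k\log(R/|a_k|)$. The paper states Jensen's formula as a cited result from Levin and gives no proof of its own, so there is nothing to compare against; your argument is the standard one, and the only mildly delicate point --- choosing $\rho>R$ so that $B(0,\rho)$ avoids both the surviving zeros of $f$ and the zeros $R^2/\ovl{a_k}$ of the Blaschke numerators --- is handled correctly via the no-zeros-on-the-boundary hypothesis.
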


We shall also require the Phragm\'en--Lindel\"of theorem.
Let $\U:= \{z\in \comp: \text{Im}(z)> 0\}$ denote the upper half plane, and $\overline\U$ denote its closure. The notion of functions of exponential type $\sigma$ naturally extends to holomorphic functions on $\mathbb{U}$.

% for the case of entire functions governing the growth of an entire function in the imaginary axis.

	\begin{Th}[Phragm\'en--Lindel\"of~{\cite[\S 6.1, Theorem 3]{levin}}]  \label{pl} Let $f:\overline{\uhp} \ra \comp$ be a continuous function, holomorphic on  $\U$, of exponential type $\sigma$, which satisfies $|f| \leq M$ on $\re$.
 %Let $\sigma >0$ be such that $f(z)$ is of $| \leq e^{(\sigma + \ep)|z|}$ for all $\ep>0$, and sufficiently large $|z|$ (which may depend on $\ep$).
 %for every $\ep>0$, there is $R_{\ep}>0$ such that whenever $|z| \geq R_{\ep}$, we have
	%. Also assume that $M>0$ is such that $|f| \leq M$ on $\re$. 
  Then for every $z = x+iy \in \overline{\mathbb{U}}$, 
	\begin{align*}
	|f(x+iy)| \leq M e^{\sigma y}.
	\end{align*}
\end{Th}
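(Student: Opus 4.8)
The plan is to twist $f$ by a suitable exponential so that the desired estimate becomes an ordinary boundedness statement, and then run the maximum‑principle machinery. The delicate point is that one cannot apply the maximum principle directly on $\U$: the half‑plane has opening angle $\pi$, which is \emph{exactly} critical for functions of order one, and the conclusion is genuinely a growth bound, not boundedness (as $f(z)=\sin(\sigma z)$ already shows). The remedy is to first pass through the two closed quadrants $Q_1=\{\,0\le\arg z\le\pi/2\,\}$ and $Q_2=\{\,\pi/2\le\arg z\le\pi\,\}$, whose opening angle $\pi/2$ is subcritical for order‑one growth.

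Fix $\ep>0$. By the definition of exponential type there is $C_\ep$ with $|f(z)|\le C_\ep e^{(\sigma+\ep)|z|}$ on $\ovl{\U}$; set $g_\ep(z):=f(z)e^{i(\sigma+\ep)z}$. On $\re$ we have $|g_\ep(t)|=|f(t)|\le M$, and on the imaginary axis $|g_\ep(\pm iy)|=|f(\pm iy)|e^{-(\sigma+\ep)y}\le C_\ep$ for $y\ge 0$; hence $|g_\ep|\le K_\ep:=\max(M,C_\ep)$ on the boundary of each $Q_j$, while inside $Q_j$ one has $|g_\ep(z)|\le C_\ep e^{(\sigma+\ep)|z|}$, i.e.\ order one. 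I would then invoke Phragm\'en--Lindel\"of on a sector of opening $\pi/2$: rotate $Q_j$ to be symmetric about a ray, let $\zeta$ be the rotated variable, and consider $g_\ep(z)\exp(-\delta\zeta^{\beta})$ with a fixed $\beta\in(1,2)$ and the principal branch of $\zeta^{\beta}$, so that $\text{Re}(\zeta^{\beta})\ge\kappa|z|^{\beta}$ with $\kappa=\cos(\beta\pi/4)>0$. This auxiliary function is $\le K_\ep$ on the two bounding rays and tends to $0$ on the arc $\{|z|=R\}\cap Q_j$ as $R\to\infty$ (since $\beta>1$), so the maximum principle on $Q_j\cap\{|z|<R\}$, followed by $R\to\infty$ and then $\delta\to0$, gives $|g_\ep|\le K_\ep$ throughout $Q_j$, hence throughout $\ovl{\U}$.

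At this stage $g_\ep$ is bounded and holomorphic on $\U$, continuous up to $\re$, with $|g_\ep|\le M$ on $\re$. The bounded‑case Phragm\'en--Lindel\"of principle for the half‑plane then upgrades this to $|g_\ep|\le M$ on all of $\ovl{\U}$ --- e.g.\ transport to the unit disc by a M\"obius map and combine the maximum principle with the a.e.\ existence of radial boundary values of a bounded holomorphic function, or, more elementarily, apply the maximum principle to $g_\ep(z)/(1-i\delta z)$ on large half‑discs and let $\delta\to0$. Either way $|f(z)|\le M\,e^{(\sigma+\ep)\,\text{Im}\,z}$ on $\ovl{\U}$; since $M$ is independent of $\ep$, letting $\ep\downarrow 0$ gives $|f(x+iy)|\le M e^{\sigma y}$.

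The only genuinely fiddly part is the sectorial step: one must choose the rotation and the exponent $\beta$ so that the correction factor $\exp(-\delta\zeta^{\beta})$ decays super‑exponentially on the arc while staying $\le 1$ on both bounding rays; everything else reduces to the maximum modulus principle on bounded regions together with two harmless limits. Being a classical statement, one may of course instead simply cite~\cite[\S 6.1, Theorem 3]{levin}.
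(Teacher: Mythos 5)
The paper does not prove this statement; it simply cites Levin's book, so there is no internal proof to compare against. Your argument is the classical one and is correct: twist by $e^{i(\sigma+\ep)z}$, pass through the two subcritical quadrants with an order-one sectorial Phragm\'en--Lindel\"of argument (using the auxiliary factor $\exp(-\delta\zeta^\beta)$, $1<\beta<2$) to conclude that $g_\ep$ is bounded on $\overline{\U}$, then invoke the bounded-case half-plane maximum principle (via $g_\ep(z)/(1-i\delta z)$ on large half-discs) to sharpen the bound to $M$, and finally let $\ep\downarrow 0$. All the delicate points — the criticality of opening $\pi$ for order one, the need for the two-step argument so that the crude bound $K_\ep$ disappears in favour of $M$, the decay of the correction factor on the arc — are correctly identified and handled.
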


From this we draw the following corollary, concerning functions in the Cartwright's class, which rather than being bounded, satisfies the slow growth assumption \eqref{eq:defn_cartwright}. This result is obtained in the course of the proof of {\cite[\S 16.1, Theorem 1]{levin}} and brought here for completeness.

\begin{Cor}\label{cor:analogue_of_pl} Let $f$ be a Cartwright's class function which is of exponential type $\sigma >0$. Then for every $z=x+iy \in \comp \setminus \re$, we have
\begin{align*}
\log |f(z)| \leq \frac{|y|}{\pi} \int_{\re} \frac{\log^{+}|f(t)|}{|t-z|^2} dt + \sigma |y|. 
\end{align*}
	\end{Cor}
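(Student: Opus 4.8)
The plan is to strip off the Poisson-integral term by dividing $f$ by a suitable zero-free outer function, and then to invoke the Phragm\'en--Lindel\"of theorem (Theorem~\ref{pl}). By symmetry it is enough to treat $z=x+iy$ with $y>0$; the case $y<0$ follows at once by applying the result, in the upper half-plane, to the entire function $\zeta\mapsto\overline{f(\bar\zeta)}$, which again lies in Cartwright's class, has the same exponential type, and has the same modulus on $\re$. For $z\in\U$ define
\[
O(z):=\exp\!\left(-\frac{i}{\pi}\int_{\re}\left(\frac{1}{t-z}-\frac{t}{1+t^{2}}\right)\log^{+}|f(t)|\,dt\right).
\]
The subtracted term $\tfrac{t}{1+t^{2}}$ forces the integrand to decay like $\log^{+}|f(t)|/t^{2}$, so the Cartwright condition~\eqref{eq:defn_cartwright} makes the integral converge locally uniformly on $\U$; hence $O$ is holomorphic and zero-free there. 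Taking real parts of the integrand gives, for $z=x+iy$ with $y>0$,
\[
\log|O(z)|=\frac{y}{\pi}\int_{\re}\frac{\log^{+}|f(t)|}{|t-z|^{2}}\,dt\;\ge\;0,
\]
so $|O|\ge1$ on $\U$. Consequently $F:=f/O$ is holomorphic on $\U$, and since $|O|\ge1$ it satisfies $|F(z)|\le|f(z)|\le Ce^{\sigma|z|}$; in particular $F$ is of exponential type at most $\sigma$ on $\U$.

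Next I would control the boundary behaviour of $F$. Since $f$ is continuous, $\log^{+}|f|$ is continuous on $\re$, and it obeys~\eqref{eq:defn_cartwright}. A routine splitting of the Poisson integral defining $\log|O|$ — a bounded neighbourhood of a given $t_{0}\in\re$, where the data is bounded and continuous, together with a tail, on which the kernel $\tfrac{y}{\pi|t-z|^{2}}$ is $\le Cy/(1+t^{2})\to0$ — shows that $\log|O(z)|$ extends continuously up to $\overline{\U}$ with boundary value $\log^{+}|f(t_{0})|$ at $t_{0}$. Therefore
\[
\limsup_{\U\ni z\to t_{0}}|F(z)|=\frac{|f(t_{0})|}{\exp\!\big(\log^{+}|f(t_{0})|\big)}=\frac{|f(t_{0})|}{\max(|f(t_{0})|,1)}\le1,\qquad t_{0}\in\re .
\]
Applying the Phragm\'en--Lindel\"of theorem for the upper half-plane to $F$ — holomorphic on $\U$, of exponential type $\le\sigma$, with boundary estimate $\limsup_{z\to t}|F(z)|\le1$ for every $t\in\re$ — yields $|F(x+iy)|\le e^{\sigma y}$ for $y>0$. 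Combining this with the identity $\log|O(z)|=\tfrac{y}{\pi}\int_{\re}\tfrac{\log^{+}|f(t)|}{|t-z|^{2}}\,dt$ gives
\[
\log|f(z)|=\log|F(z)|+\log|O(z)|\le\sigma y+\frac{y}{\pi}\int_{\re}\frac{\log^{+}|f(t)|}{|t-z|^{2}}\,dt ,
\]
which is the assertion.

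The one genuinely delicate point is the last step. The quotient $F=f/O$ need not extend \emph{continuously} to $\overline{\U}$ — the harmonic conjugate of the merely continuous function $\log^{+}|f|$ may fail to be continuous up to the boundary — so Theorem~\ref{pl} does not apply verbatim. The fix is the standard half-plane Phragm\'en--Lindel\"of principle stated with boundary data in the $\limsup$ sense; equivalently, the fact that a subharmonic function on $\U$ of order $\le1$ which is $\le0$ on $\re$ in the $\limsup$ sense is dominated there by $\tau\operatorname{Im}z$ with $\tau=\limsup_{y\to\infty}\tfrac1y\log|F(iy)|\le\sigma$ (see~\cite[\S6]{levin}, from which Theorem~\ref{pl} itself is quoted). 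All remaining items — convergence of the defining integral for $O$, the formula for $\log|O|$, and the boundary continuity used above — are routine.
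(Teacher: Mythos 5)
Your proof is correct and follows essentially the same route as the paper: form the Poisson integral $g$ of $\log^{+}|f|$, divide $f$ by the zero-free outer function $e^{g+i\tilde g}$ (your $O$), observe that the quotient has modulus at most $1$ on $\re$ and exponential type at most $\sigma$, and invoke Phragm\'en--Lindel\"of. You additionally notice, correctly, that the quotient $f/O$ need not extend continuously to $\overline{\U}$ (since $\tilde g$ may fail to have continuous boundary values), so one must use the $\limsup$ form of Phragm\'en--Lindel\"of rather than Theorem~\ref{pl} verbatim; the paper's own proof glosses over this point, so your caveat is a genuine, if minor, improvement.
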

\begin{proof}
Assume, without loss of generality that $z\in \uhp$. Denote \[g(z)=\frac{y}{\pi}\int_{\re} \frac{\log^{+}|f(t)|}{|t-z|^2} dt,\]
which converges by \eqref{eq:defn_cartwright}. This is a harmonic function on $\U$ with boundary values on $\re$ coinciding with $\log^+|f|$. Denote the harmonic conjugate of $g$ by $\tilde g$ and write
\[F(z)=f(z)e^{-g(z)-i\tilde g(z)}.\]
Since $g\geq 0$, we have $|F(z)| \leq |f(z)| \exp(-g(z)) \leq |f(z)|$, so that $F$ is analytic of exponential type at most $\sigma$ on $\U$. Moreover
$|F(t)|\le 1$ for all $t\in \re$, so that we may apply Theorem~\ref{pl} to obtain $|F(x+iy)| \leq \exp(\sigma y)$. Taking logarithm on both sides, the corollary follows. 
\end{proof}
\subsection{Tools from the theory of SGPs}\label{subs: perlim: gaussian}

Throughout this section, let $F$ be a fixed centred SGP with spectral measure $\mu$, the moments of which are all finite.
%Since all moments of $\mu$ are finite, 
It follows that $F$ is smooth and its higher derivatives $F^{(n)}$ are centred SGPs with spectral measure $\mu_n$ given by $d\mu_n(x) = x^{2n} d\mu(x)$, and covariance kernel given by
\begin{align}\label{eq:der_of_SGP}
\e[ F^{(n)}(x)\cdot   F^{(n)}(0)] = (-1)^{n} ~  k^{(2n)} (x),\text{ for $x\in \re$,}
\end{align}
where $k$ is the covariance kernel of $F$.
%describes a classical way of generating a SGP from its spectral measure using a random series (see \cite[Lem. 3.7]{FFN21} for a proof). 
Define the space of symmetric square integrable functions with respect to $\mu$ by
\[
\Lsym := \left\{ \varphi: \re\to\comp: \:\: \int_\re |\varphi|^2 d\mu <\infty, \:\: \varphi(-\lambda)=\overline{\varphi(\lambda)} \:\:\forall \lm\in \re\right\}.
\]

%\mathcal{L}^2_\mu(\re)

$F$ boasts the following orthonormal representation.
\begin{Lemma}
    [{\cite[Lem. 3.7]{FFN21}}] \label{lem:series_repn} 
Let $\{\varphi_n\}$ be an orthonormal basis of $\Lsym$ and write $\psi_n(t) = \int_\re e^{-i\lm t}\varphi_n(\lm) d\mu(\lm)$.
 Then
\[
F \overset{d}{=} \sum_n \zeta_n \psi_n, \quad \zeta_n \overset{\text{i.i.d.}}{\sim} \mathcal{N}_\re(0,1).
\]
%is a continuous SGP with spectral measure $\mu$.
\end{Lemma}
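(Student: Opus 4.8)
The plan is to prove this via the standard spectral isometry: identify the Gaussian linear space of $F$ with $\Lsym$ as real Hilbert spaces, and then read off the representation by expanding the ``exponential'' vectors of $\Lsym$ in the orthonormal basis $\{\varphi_n\}$. (This is the stationary-process incarnation of the spectral representation theorem; the argument below is self-contained.)

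First I would record that $\Lsym$, with the inner product $\langle\varphi,\psi\rangle:=\int_\re\varphi\overline{\psi}\,d\mu$, is a \emph{real} Hilbert space: since $\mu$ is symmetric and $\varphi(-\lm)=\overline{\varphi(\lm)}$, $\psi(-\lm)=\overline{\psi(\lm)}$, the substitution $\lm\mapsto-\lm$ gives $\langle\varphi,\psi\rangle=\overline{\langle\varphi,\psi\rangle}$. For $t\in\re$ put $g_t(\lm):=e^{i\lm t}$; then $g_t\in\Lsym$, and the same substitution yields $\langle g_s,g_t\rangle=\int_\re e^{i\lm(s-t)}\,d\mu(\lm)=k(s-t)=\E[F(s)F(t)]$. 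Hence the assignment $g_t\mapsto F(t)$ extends, by real-linearity and continuity, to an isometry $U$ from the closed real-linear span of $\{g_t:t\in\re\}$ in $\Lsym$ into $L^2(\Om,\P)$, with image the Gaussian space $\mathcal{H}_F$, i.e.\ the closed real-linear span of $\{F(t):t\in\re\}$ in $L^2(\Om,\P)$.

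The one step requiring a genuine analytic input is showing that the closed real-linear span of $\{g_t:t\in\re\}$ is \emph{all} of $\Lsym$, so that $U$ is a unitary isomorphism $\Lsym\to\mathcal{H}_F$. Density of the complex-linear span of $\{g_t\}$ in the complex space $L^2(\mu)$ is classical: if $\varphi\in L^2(\mu)$ is orthogonal to every $g_t$, the condition $\langle\varphi,g_t\rangle=0$ for all $t$ says that the finite complex measure $\overline{\varphi}\,d\mu$ has identically vanishing Fourier transform, hence is zero, so $\varphi=0$ in $L^2(\mu)$. To pass from $L^2(\mu)$ to $\Lsym$, note that the conjugate-linear isometry $J$ of $L^2(\mu)$ defined by $(J\varphi)(\lm):=\overline{\varphi(-\lm)}$ fixes every $g_t$ and has $\Lsym$ as its fixed-point set; averaging an $L^2(\mu)$-approximation $\sum_jc_jg_{t_j}$ of $\varphi\in\Lsym$ with its $J$-image produces the approximation $\sum_j\operatorname{Re}(c_j)\,g_{t_j}$, which lies in the real-linear span. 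I expect this density argument, together with keeping the real and complex Hilbert-space structures apart throughout, to be the only delicate point; the rest is routine bookkeeping.

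Finally I would set $\zeta_n:=U(\varphi_n)\in\mathcal{H}_F$. Every element of $\mathcal{H}_F$ is a centred Gaussian, and $\E[\zeta_n\zeta_m]=\langle\varphi_n,\varphi_m\rangle=\delta_{nm}$, so the $\zeta_n$ are jointly Gaussian and uncorrelated, hence i.i.d.\ $\mathcal{N}_\re(0,1)$. Expanding $g_t=\sum_n\langle g_t,\varphi_n\rangle\varphi_n$ in $\Lsym$ and noting, via $\lm\mapsto-\lm$ together with $\overline{\varphi_n(-\lm)}=\varphi_n(\lm)$, that $\langle g_t,\varphi_n\rangle=\int_\re e^{-i\lm t}\varphi_n(\lm)\,d\mu(\lm)=\psi_n(t)$, the continuity of $U$ gives $F(t)=U(g_t)=\sum_n\psi_n(t)\,\zeta_n$, the series converging in $L^2(\Om)$ because $\sum_n\psi_n(t)^2=\|g_t\|^2=k(0)<\infty$. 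Consequently, for any $t_1,\dots,t_m$ the vectors $(F(t_j))_{j=1}^m$ and $\big(\sum_n\psi_n(t_j)\zeta_n\big)_{j=1}^m$ are both centred Gaussian with covariance matrix $(k(t_i-t_j))_{i,j}$, so their laws coincide; since this holds for all finite marginals, $F\overset{d}{=}\sum_n\zeta_n\psi_n$, as claimed.
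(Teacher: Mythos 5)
Your proof is correct. Note, however, that the paper does not prove this lemma at all; it is imported verbatim from \cite[Lem.~3.7]{FFN21}, so there is no ``paper's proof'' to compare against. Your argument is the standard spectral-isometry derivation of the Karhunen--Lo\`eve-type representation for stationary Gaussian processes, and each step is sound: the inner product on $\Lsym$ is indeed real-valued by the symmetry $\varphi(-\lm)=\overline{\varphi(\lm)}$ together with the symmetry of $\mu$; the kernel computation $\langle g_s,g_t\rangle=k(s-t)$ is correct (using that $k$ is even); the density of the real span of $\{g_t\}$ in $\Lsym$ is handled carefully by first working in the complex space $L^2(\mu)$ via uniqueness of Fourier transforms of finite measures (with $\varphi\in L^1(\mu)$ guaranteed by finiteness of $\mu$) and then symmetrizing with the conjugate-linear involution $J$; the identification $\langle g_t,\varphi_n\rangle=\psi_n(t)$ (and its reality) follows from the same change of variables; and the final matching of finite-dimensional Gaussian marginals via equal covariances closes the argument. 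This is almost certainly the same route taken in \cite{FFN21}.
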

% We note that every space  $\mathcal{L}^2_\mu(\re)$, with a measure $\mu$ satisfying the requirements above, admits such an orthonormal basis. In this case, the condition on the elements of the basis, $\varphi_n(-\lambda)=\overline{\varphi_n(\lambda)}$, implies that the functions $ \psi_n(t)$ are real.

%An alternative way to generate a SGP from its spectral measure is by stochastic integration, as described in Result~\ref{res:spectral_representation}.

The following lemma shows that $F$ grows at most linearly on the real line, providing a probabilistic bound on its growth rate.

\begin{Lemma}\label{lem:lineargrowth} 
There exists $c>0$ such that for every $M>0$ sufficiently large 
\[\P\Big(\forall s\in \re\,:\,|F(s)| \le  M(1+|s|)\Big)\ge 1-e^{-cM^2}.\]   
\end{Lemma}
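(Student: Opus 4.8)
\textbf{Proof strategy for Lemma~\ref{lem:lineargrowth}.} The plan is to control $F$ uniformly on dyadic blocks of the real line and then union-bound over those blocks. The scale is set by the fact that $\E[N_F(1)] = \gamma/\pi$ is finite, so $F$ is locally a well-behaved smooth Gaussian field; more importantly, we have the orthonormal representation $F \overset{d}{=} \sum_n \zeta_n \psi_n$ from Lemma~\ref{lem:series_repn}, and each $\psi_n$ is a bounded (in fact entire of exponential type) function with $\psi_n$ and $\psi_n'$ controlled by $\|\varphi_n\|_{\Lsym} = 1$ together with the finiteness of the moments $\int \lambda^2 \, d\mu(\lambda) = \gamma^2 < \infty$.

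First I would fix $k \in \natz$ and consider the interval $I_k = [k, k+1]$ (and symmetrically $[-k-1,-k]$). On $I_k$ the process $F$ has, by stationarity, the same law as $F$ on $[0,1]$: it is a centred Gaussian field with $\E[F(t)^2] = k(0) = 1$ and $\E[F'(t)^2] = \gamma^2$, both finite. A standard Gaussian maximal inequality — e.g. Borell--TIS combined with a Dudley-type entropy bound, or simply the union bound over a fine mesh using the modulus of continuity of a $C^1$ Gaussian field — gives $\E[\sup_{t\in[0,1]}|F(t)|] \le C_0$ for an absolute constant $C_0$ depending only on $\gamma$, and concentration of the supremum around its mean: for $\lambda > 0$,
\begin{align*}
\P\Big(\sup_{t\in I_k}|F(t)| \ge C_0 + \lambda\Big) \le e^{-\lambda^2/2},
\end{align*}
since the supremum over a bounded set is $1$-Lipschitz in the Gaussian sense (variance of the field is $1$). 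Here I use stationarity to make the bound uniform in $k$.

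Next I would choose the threshold at level $k$ to be $M(1+k)$, i.e. I want $\sup_{t\in I_k}|F(t)| \le M(1+k)$ to fail with probability summable in $k$. Taking $\lambda = M(1+k) - C_0 \ge M(1+k)/2$ for $M \ge 2C_0$, the bound above gives failure probability on block $k$ at most $\exp(-M^2(1+k)^2/8)$. Summing over all $k \ge 0$ (and the symmetric negative blocks) yields
\begin{align*}
\P\Big(\exists s \in \re : |F(s)| > M(1+|s|)\Big) \le 2\sum_{k\ge 0} e^{-M^2(1+k)^2/8} \le C_1 e^{-M^2/8}
\end{align*}
for $M$ large enough, where the geometric-type tail of the sum is absorbed into the constant $C_1$, and then into a slightly smaller exponent to drop $C_1$: for $M$ large, $C_1 e^{-M^2/8} \le e^{-cM^2}$ with, say, $c = 1/16$. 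This proves the claim.

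The main obstacle is purely technical: obtaining the uniform-in-$k$ bound $\E[\sup_{t\in I_k}|F(t)|] \le C_0$ with the correct (variance-one) Gaussian concentration, and justifying that the supremum over the reals reduces to the countable union of block suprema (continuity of $F$ handles this). One must be a little careful that the chaining/entropy estimate only uses $\E[F^2] = 1$ and $\E[(F')^2] = \gamma^2$ — both finite under the standing assumption that all moments of $\mu$ are finite (indeed only the second moment is needed here) — and that stationarity genuinely transports the $[0,1]$ estimate to every $I_k$. Everything else is a routine Borell--TIS plus Borel--Cantelli-style summation.
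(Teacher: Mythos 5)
Your proof is correct and uses essentially the same approach as the paper's: Dudley plus Borell--TIS to bound the supremum on a base interval, stationarity to transport the estimate along the line, a block decomposition with linearly growing thresholds, and a union bound whose tail is absorbed into a slightly smaller constant in the Gaussian exponent. The only cosmetic difference is that you partition $\re$ into unit blocks, whereas the paper uses blocks of length $aM$; both lead to the same $1 - e^{-cM^2}$ bound.
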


	\begin{proof}[Proof of Lemma~\ref{lem:lineargrowth}]
        By a simple application of Dudley's bound~\cite[Thm. 1.3.3]{AT07} and the Borell-TIS inequality~\cite[Thm. 2.1.1]{AT07} (see~\cite[Lem. 3.10]{lp2} for a full proof), there exist $a,c  \in (0,1)$ such that for every  $M$ sufficiently large  we have
	\begin{align*} %\label{supremumnorm}
	\p\left(\sup_{[0,aM]} |F| \geq M\right) \leq \exp(-cM^2).
	\end{align*}
        By the stationarity of $F$, the events
        \[{E}_{k,M} := \left\{\sup\{ |F(t)| : t\in [akM,a(k+1)M]\}>\frac{(|k|+1)M}{2} \right\}\]
        satisfy 
		$\p({E}_{k,M}) \leq \exp(-ck^2M^2/4)$. Taking a union bound, we obtain, for sufficiently large $M$,
        \[\p(\cup_{k \in \Z} {E}_{k,M}) \le \exp(-cM^2/5).\] 
        Observing that
        $\{\exists s \in \re\ :\ |F(s)| >  M(1+|s|)\}\subset \cup_{k \in \Z} {E}_{k,M},$ the lemma follows.
    \end{proof}

We also need bounds on \emph{ball event} probabilities.

\begin{Lemma}[Application of {\cite[Lemma 3.4]{lp2}}] \label{lem:smallballprob} 
Suppose that $\muac \neq 0$. Then there exist $\alpha,\beta>0$ such that for all $T\ge 1$ and $\ell>0$ we have:
\[
\P\left( \sup_{[0,T]} |F| \le \ell \right) \le e^{\beta T^2} \ell^{\alpha T}.
\]
\end{Lemma}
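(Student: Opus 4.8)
The target statement is a small-ball bound with a specific form: $\P(\sup_{[0,T]}|F|\le \ell)\le e^{\beta T^2}\ell^{\alpha T}$, and we are told it is an application of \cite[Lemma 3.4]{lp2}. So the plan is not to reprove a small-ball estimate from scratch, but to package the cited result into this clean exponential-in-$T^2$ form. The natural route is a tiling/independence-type argument: split $[0,T]$ into $\lfloor T\rfloor$ unit subintervals $I_j=[j,j+1]$, $j=0,\dots,\lfloor T\rfloor-1$. On the event $\{\sup_{[0,T]}|F|\le\ell\}$ we certainly have $\sup_{I_j}|F|\le\ell$ for each $j$. If the restrictions $F|_{I_j}$ were independent we would immediately get $\P(\sup_{[0,T]}|F|\le\ell)\le \prod_j \P(\sup_{I_j}|F|\le\ell)=\P(\sup_{[0,1]}|F|\le\ell)^{\lfloor T\rfloor}$ by stationarity, and then a single unit-interval small-ball bound of the form $\P(\sup_{[0,1]}|F|\le\ell)\le C\ell^{\alpha}$ (for $\ell$ small, with a crude bound $\le 1$ always) would finish it — giving even $\ell^{\alpha(\lfloor T\rfloor)}$ with no $e^{\beta T^2}$ needed. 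The appearance of the $e^{\beta T^2}$ factor signals that exact independence is unavailable; instead one must pay an entropy/conditioning cost to decouple the unit blocks, and this cost is what \cite[Lemma 3.4]{lp2} is presumably designed to absorb.

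Concretely, I would proceed as follows. First, invoke \cite[Lemma 3.4]{lp2} — which, since $\muac\not\equiv 0$, should give a bound of the shape $\P(\sup_{[0,1]}|F|\le\ell\mid \mathcal{G})\le C\ell^{\alpha}$ for $\ell\le 1$, where the conditioning $\mathcal{G}$ encodes the behaviour of $F$ on the rest of the line (the absolutely continuous component of the spectrum is precisely what lets a unit window of $F$ be ``spread out'' regardless of the conditioning). The key structural point to extract is a uniform-in-conditioning small-ball estimate on a single unit interval. Then I would chain these: write $\P(\sup_{[0,T]}|F|\le\ell)\le \E\big[\prod_{j=0}^{\lfloor T\rfloor-1}\P(\sup_{I_j}|F|\le\ell\mid \sigma(F|_{I_0},\dots,F|_{I_{j-1}}))\big]$ and bound each conditional factor by $C\ell^{\alpha}$ (replacing $\ell$ by $\min(\ell,1)$, noting the bound is trivially $\le 1$ for $\ell>1$ so the final statement's ``all $\ell>0$'' is harmless because $e^{\beta T^2}\ell^{\alpha T}\ge 1$ there anyway for $\ell\ge 1$). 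This yields $\P(\sup_{[0,T]}|F|\le\ell)\le (C\ell^{\alpha})^{\lfloor T\rfloor}\le C^{T}\ell^{\alpha(T-1)}$. Absorbing $C^T$ and the off-by-one in the exponent of $\ell$ (using $\ell^{-\alpha}\le e^{\alpha(\text{something})}$ only when $\ell$ is not too small — here is where a genuinely quadratic term could enter if one is not careful, but since $C^T=e^{T\log C}$ is only linear, the $e^{\beta T^2}$ is a comfortable overestimate) gives the claimed form after renaming $\alpha,\beta$.

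The main obstacle is the decoupling step: justifying that the conditional small-ball probability on $I_j$ given the past blocks is bounded by $C\ell^\alpha$ uniformly, i.e.\ that conditioning on $F$ elsewhere cannot force $F$ to be small on a unit window with probability close to $1$. This is exactly the content one must read off from \cite[Lemma 3.4]{lp2}; the role of the hypothesis $\muac\neq 0$ is to supply an absolutely continuous piece of spectrum so that, after conditioning, the residual Gaussian field on $I_j$ still has a nondegenerate density and hence a polynomial small-ball lower order $\alpha$ (related to the number of effective degrees of freedom, which scales with the length — here length $1$, so $\alpha$ is a fixed constant). If \cite[Lemma 3.4]{lp2} is instead stated already for a general interval $[0,T]$ with an $\ell^{\alpha T}$ conclusion but a worse or $T$-dependent prefactor, then the proof collapses to a one-line citation plus bounding that prefactor by $e^{\beta T^2}$; I would check which version is available and, if it is the interval version, simply quote it and verify the prefactor bound, and if it is the unit-interval version, run the Markov-chain decoupling above.
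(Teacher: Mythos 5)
The paper offers no proof here at all: the bracketed tag ``Application of \cite[Lemma 3.4]{lp2}'' is the whole argument, so your fallback (``quote the interval version and check the prefactor'') is what the text actually does. The problem is with your primary route, the block-decoupling/Markov-chain argument, which has a genuine conceptual gap in the present setting. From Section 4 onwards the paper fixes $\mu$ compactly supported, hence $F$ is a.s.\ real-analytic and extends to an entire function (equation~\eqref{eq:real_analytic}). Knowing $F$ on a single interval of positive length therefore determines $F$ everywhere almost surely. Consequently $\sigma(F|_{I_0},\dots,F|_{I_{j-1}})$ already determines $F|_{I_j}$, so each factor $\P\left(\sup_{I_j}|F|\le\ell\ \middle|\ \sigma(F|_{I_0},\dots,F|_{I_{j-1}})\right)$ is the $\{0,1\}$-valued indicator $\mathbbm{1}_{\{\sup_{I_j}|F|\le\ell\}}$; the chained product collapses to $\P(\sup_{[0,T]}|F|\le\ell)$ and the argument is circular. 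There is no residual randomness on $I_j$ for the absolutely continuous spectral component to supply, precisely because the process is bandlimited.

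Relatedly, reading $e^{\beta T^2}$ as ``a comfortable overestimate'' that could in principle be replaced by $C^T$ is incorrect. The quadratic exponent is intrinsic to small-ball bounds for bandlimited stationary Gaussian processes. The mechanism behind \cite[Lemma 3.4]{lp2} is to discretize $[0,T]$ at a fixed mesh finer than the Nyquist spacing, giving $n\asymp T$ sample points, and to bound
\[
\P\left(\sup_{[0,T]}|F|\le\ell\right)\ \le\ \P\left(|F(t_i)|\le\ell,\ i=1,\dots,n\right)\ \le\ \frac{(2\ell)^n}{\sqrt{(2\pi)^n\det\Sigma}},
\]
where $\Sigma$ is the $n\times n$ covariance matrix of the samples. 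For compactly supported spectrum sampled denser than Nyquist, the eigenvalues of $\Sigma$ beyond the $\sim AT/\pi$ ``effective'' ones decay exponentially in rank, so $\log\det\Sigma\asymp -T^2$; the hypothesis $\muac\not\equiv 0$ is what prevents $\det\Sigma$ from vanishing outright and gives a lower bound of exactly this order. This yields $e^{\beta T^2}\ell^{\alpha T}$, and there is no way to shave the quadratic term in general. Indeed, Step~2 of Section~\ref{sec:proof_of_OC} applies the lemma with interval length $\ep^2 T$ and $\ell=e^{-C\ep^2 T}$, so that the competition between $e^{\beta(\ep^2T)^2}$ and $(e^{-C\ep^2T})^{\alpha\ep^2 T}$ is exactly what produces the advertised $e^{-c_2\ep^4T^2}$. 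A linear-in-$T$ prefactor is not available, and your proposed decoupling, were it valid, would give a false strengthening.
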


%The original lemma:
%Suppose that $\muac \neq 0$. Then there exist constants $b \in (0,1)$ and $C>1$, depending only on $\mu$, such that for every $\eta >0, m \in \nat, \deltat \in (0,b m)$, we have
	%\begin{align}\label{smallballpp}
	%\p\left(\sup_{[0,T]} |F| \leq \eta\right) \leq \left(\frac{Cm}{\deltat}\right)^{m^2}\eta^m.
	%\end{align}

\begin{Lemma}[{\cite[Lemma 3.17]{FFN21}}]\label{lem: FFN ball}
Suppose that $\int |\lm|^\delta d\mu(\lm)<\infty$ for some $\delta>0$. Then there exist $\ell_0>0$ and $c>1$ such that
for all $T\ge 1$ and $\ell>\ell_0$ we have:
%$\ell>\ell_0\sqrt{\var h(0)}$
\[
\p\left(\sup_{[0,T]} |F| \leq \ell \right) \ge %\ell^{\alpha T}.
\p\left( c |F(0)|\le \ell \right)^T.
\]
%The constants $c$ and $\ell_0$ depend only on $\delta$, $m_\delta$ and $m_0$.
\end{Lemma}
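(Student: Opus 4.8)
The plan is to prove the estimate for a single unit interval and then tile $[0,T]$ by unit intervals using the Gaussian correlation inequality; the loss incurred in passing from an interval down to the single point $\{0\}$ will be absorbed into the constant $c$.

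First I would establish a single-interval comparison: there exist $c_0>1$ and $\ell_1>0$ with $\p(\sup_{[0,1]}|F|\le\ell)\ge\p(c_0|F(0)|\le\ell)$ for all $\ell\ge\ell_1$. Writing $\delta'=\min(\delta,2)>0$ and using $1-\cos x\le 2|x|^{\delta'}$ (and $\int\lm^2 d\mu\le(\int|\lm|^\delta d\mu)^{2/\delta}<\infty$ when $\delta>2$), the hypothesis yields $k(0)-k(t)=\int_\re(1-\cos(\lm t))\,d\mu(\lm)\le C|t|^{\delta'}$ for $|t|\le1$; by Dudley's entropy bound \cite[Thm. 1.3.3]{AT07} this makes $F$ a.s. continuous with $m:=\E\sup_{[0,1]}|F|<\infty$. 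Since $\mathrm{Var}(F(t))=k(0)=1$ for every $t$, the Borell--TIS inequality \cite[Thm. 2.1.1]{AT07} gives $\p(\sup_{[0,1]}|F|\le\ell)\ge 1-e^{-(\ell-m)^2/2}$ for $\ell>m$, whereas $F(0)\sim\mathcal N_\re(0,1)$ gives $\p(c_0|F(0)|\le\ell)=1-2\overline\Phi(\ell/c_0)$, with $\overline\Phi$ the standard Gaussian upper tail satisfying $\overline\Phi(x)\ge\tfrac{x}{1+x^2}\phi(x)\gtrsim x^{-1}e^{-x^2/2}$. Because $c_0>1$ forces $\tfrac1{2c_0^2}<\tfrac12$, a comparison of exponents shows $e^{-(\ell-m)^2/2}\le 2\overline\Phi(\ell/c_0)$ for all large $\ell$, which proves the claim; I would just take $c_0=2$.

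Next I would tile. For $T\ge1$ cover $[0,T]$ by the $\lceil T\rceil$ unit intervals $I_j=[j-1,j]$, $1\le j\le\lceil T\rceil$. By a.s. continuity each event $\{\sup_{I_j}|F|\le\ell\}$ is a decreasing limit, over finite dense subsets of $I_j$, of symmetric convex cylinder events, so the Gaussian correlation inequality applies (iterated, then passing to the limit); together with stationarity,
\[
\p\!\left(\sup_{[0,T]}|F|\le\ell\right)\ \ge\ \prod_{j=1}^{\lceil T\rceil}\p\!\left(\sup_{I_j}|F|\le\ell\right)\ =\ \p\!\left(\sup_{[0,1]}|F|\le\ell\right)^{\lceil T\rceil}.
\]
Invoking Step 1 with $c_0=2$, and using $0\le\p(2|F(0)|\le\ell)\le1$ together with $\lceil T\rceil\le 2T$ (which holds precisely because $T\ge1$),
\[
\p\!\left(\sup_{[0,T]}|F|\le\ell\right)\ \ge\ \p(2|F(0)|\le\ell)^{\lceil T\rceil}\ \ge\ \p(2|F(0)|\le\ell)^{2T}\ =\ \bigl(\p(2|F(0)|\le\ell)^2\bigr)^{T}.
\]
Finally I would choose $c$ so that $\p(2|F(0)|\le\ell)^2\ge\p(c|F(0)|\le\ell)$ for all large $\ell$: writing $\p(a|F(0)|\le\ell)=1-2\overline\Phi(\ell/a)$ and using $(1-x)^2\ge1-2x$, this reduces to $\overline\Phi(\ell/c)\ge2\overline\Phi(\ell/2)$, which holds for large $\ell$ as soon as $c>2$ (again since $\tfrac1{2c^2}<\tfrac18$), so $c=3$ works. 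Taking $\ell_0$ to be the maximum of the finitely many thresholds produced above completes the proof.

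All the estimates are elementary; the two points deserving attention are the justification that the Gaussian correlation inequality applies to the supremum events (handled by monotone approximation from countable dense subsets, using a.s. continuity) and the bookkeeping in the Gaussian tail comparisons, where the crucial structural fact is that any constant strictly above $1$ suffices, so the Borell--TIS exceedance exponent $e^{-\ell^2/2}$ — which matches the tail of $|F(0)|$ itself — beats $e^{-\ell^2/(2c_0^2)}$ with exponentially large room to spare. I expect the approximation step for the correlation inequality to be the only mildly delicate point, and even that is standard.
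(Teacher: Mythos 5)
The paper does not contain a proof of this lemma --- it is imported verbatim as \cite[Lemma~3.17]{FFN21} --- so there is no internal argument to compare against. Your proof is nevertheless correct and self-contained: the one-interval Borell--TIS estimate beats the $|F(0)|$-tail once the constant is pushed above $1$, the iterated Gaussian correlation inequality (plus a routine monotone approximation by finite cylinder events) handles the tiling, and the bookkeeping with $\lceil T\rceil\le 2T$ and $\p(2|F(0)|\le\ell)^2\ge\p(3|F(0)|\le\ell)$ is sound. The only points worth stating more carefully if this were written out in full are (i) the Borell--TIS step should be phrased via $\sup_{[0,1]\times\{\pm1\}}\varepsilon F(t)$ so that the deviation bound genuinely applies to the two-sided supremum $\sup|F|$, and (ii) the continuity of $F$ is already assumed throughout the paper, so the Dudley step is redundant here (though harmless, and needed if one only assumes the moment hypothesis as you do). For what it is worth, the original proof in~\cite{FFN21} also rests on a unit-interval ball estimate combined with the Gaussian correlation inequality, so your route is essentially the standard one; the main observation, which you identify correctly, is that the slack from $|F(0)|$ to $\sup_{[0,1]}|F|$ is exponentially small compared to what any constant $c>1$ buys in the tail of $c|F(0)|$.
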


The following famous comparison inequality is due to Anderson.

\begin{Lemma}[{\cite{Anderson55}}]\label{lem:ander}
Let $X$ and $Y$ be independent centered Gaussian processes on $I$. Then for any $\ell>0$,
\[
\p \left( \sup_I |X+Y|\le \ell \right) \le \p \left(\sup_I |X| \le \ell \right).
\]
\end{Lemma}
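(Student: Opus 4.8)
The plan is to prove this via the classical symmetrization/convexity argument that underlies Anderson's theorem on convex symmetric sets. The key observation is that the statement reduces to a purely finite-dimensional fact: by separability of the processes on $I$ (or by passing to a countable dense subset of $I$ if $I$ is an interval, using continuity), it suffices to show that for any finite collection of points $t_1,\dots,t_n\in I$, writing $\vec X=(X(t_1),\dots,X(t_n))$ and $\vec Y=(Y(t_1),\dots,Y(t_n))$, we have $\P(\|\vec X+\vec Y\|_\infty\le \ell)\le \P(\|\vec X\|_\infty\le\ell)$, where $\|\cdot\|_\infty$ is the sup-norm on $\R^n$. Taking limits over an increasing sequence of finite sets recovers the full statement. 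So everything comes down to a statement about sums of independent centered Gaussian vectors in $\R^n$.

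The heart of the matter is \textbf{Anderson's lemma}: if $K\subseteq\R^n$ is a convex set symmetric about the origin, $g$ is a nonnegative, even, quasi-concave (i.e.\ with convex superlevel sets) integrable function on $\R^n$, then $a\mapsto \int_{K+a} g(x)\,dx$ is maximized at $a=0$; more precisely $\int_{K} g(x-a)\,dx \le \int_K g(x)\,dx$ for all $a$. Here I would take $K=\{x\in\R^n:\|x\|_\infty\le\ell\}$, which is convex and symmetric, and $g$ the centered Gaussian density of $\vec X$ (nonnegative, even, log-concave hence quasi-concave; if $\vec X$ is degenerate one restricts to the supporting subspace, on which the density still has the required properties, or one perturbs and passes to a limit). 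Then, conditioning on $\vec Y=a$ and using independence,
\[
\P(\|\vec X+\vec Y\|_\infty\le\ell)=\E_{\vec Y}\!\left[\int_{K-\vec Y} g(x)\,dx\right]\le \int_K g(x)\,dx=\P(\|\vec X\|_\infty\le\ell).
\]
The proof of Anderson's lemma itself I would invoke as a standard fact (it follows from the Brunn--Minkowski inequality applied to the layer-cake decomposition of $g$), rather than reprove it; the paper already cites \cite{Anderson55}, so citing it suffices.

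The main (and really only) obstacle is the reduction from the process statement to the finite-dimensional one, i.e.\ making sure that $\sup_I|X|$ is well-approximated by maxima over finite subsets. If $I$ is a compact interval and $X,Y$ have continuous sample paths this is immediate: pick a countable dense set $D=\{t_1,t_2,\dots\}\subseteq I$, let $D_n=\{t_1,\dots,t_n\}$, and note $\{\sup_I|X+Y|\le\ell\}=\bigcap_n\{\max_{D_n}|X+Y|\le\ell\}$ and likewise for $X$, so the inequality passes to the limit by monotone convergence of the probabilities. For general $I$ one invokes separability of the Gaussian processes in the same way. I would state the continuity/separability assumption explicitly (it is harmless in all our applications, where the processes are smooth), carry out the reduction in one line, and then the finite-dimensional case is exactly Anderson's lemma as above.
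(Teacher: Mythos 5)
The paper does not prove this lemma; it simply invokes it as a classical result with the citation \cite{Anderson55}. Your argument is correct and is precisely the standard proof of Anderson's inequality that the cited reference supplies: reduce to the finite-dimensional case via separability and monotone convergence, then apply Anderson's lemma on convex symmetric sets and symmetric quasi-concave densities (itself a consequence of Brunn--Minkowski), conditioning on $\vec Y$ and using independence. Your handling of the degenerate-covariance case (restrict to the support or perturb and pass to the limit) is the right technical fix, so nothing is missing.
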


We shall also need the celebrated Gaussian correlation inequality.
\begin{Lemma}[{\cite{LM17,Royen14}}]\label{lem:GCI}
Let $X$ be a centred Gaussian vector in $\R^d$. Then, for any convex sets $K, L\subset \R^d$, symmetric around $0$, we have 
\[
\P(X\in (K\cap L)) \ge \P(X\in K) \,\P(X\in L).
\]
\end{Lemma}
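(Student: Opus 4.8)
The plan is to prove Royen's inequality, following the streamlined argument of Lata\l a and Matlak~\cite{LM17,Royen14}.

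\textbf{Step 1: reduction to coordinate slabs.} First I would note that every closed symmetric convex set is an intersection of symmetric slabs $\{x:|\langle x,v\rangle|\le 1\}$, so by approximating $K$ and $L$ from outside by finite intersections of such slabs and using continuity of the Gaussian measure along decreasing sequences of sets, it suffices to treat $K=\{x:|\langle x,v_i\rangle|\le 1,\ i\in I\}$, $L=\{x:|\langle x,v_j\rangle|\le 1,\ j\in J\}$ with $I,J$ finite. Adjoining the jointly Gaussian variables $\langle X,v_i\rangle$, $\langle X,v_j\rangle$ to the vector, the claim becomes: for a centred Gaussian vector $(X_1,\dots,X_n)$, an integer $1\le p<n$, and $t_1,\dots,t_n>0$,
\[
\P\Big(\textstyle\bigcap_{i=1}^n\{|X_i|\le t_i\}\Big)\ \ge\ \P\Big(\textstyle\bigcap_{i=1}^p\{|X_i|\le t_i\}\Big)\,\P\Big(\textstyle\bigcap_{i=p+1}^n\{|X_i|\le t_i\}\Big).
\]

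\textbf{Step 2: interpolation in the cross-covariance.} Split the covariance matrix $\Sigma$ into the $p\times p$ block $A$, the $(n-p)\times(n-p)$ block $B$, and the off-diagonal block $C$, and set $D:=\Sigma-\mathrm{diag}(A,B)$ (which has vanishing diagonal blocks). For $\rho\in[0,1]$ put $\Sigma_\rho:=\mathrm{diag}(A,B)+\rho D=\rho\Sigma+(1-\rho)\mathrm{diag}(A,B)$, which is positive semidefinite (a convex combination of such) and satisfies $\partial_\rho\Sigma_\rho=D$. Let $X^{(\rho)}\sim\mathcal N(0,\Sigma_\rho)$ and $\psi(\rho):=\P(\bigcap_{i\le n}\{|X^{(\rho)}_i|\le t_i\})$. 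Then $\psi(0)$ equals the right-hand side (the two blocks are independent at $\rho=0$) while $\psi(1)$ is the left-hand side, so it is enough to show $\psi$ is non-decreasing on $[0,1]$.

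\textbf{Step 3: squares and the Laplace transform.} Next I would pass to $Y_i:=(X^{(\rho)}_i)^2$, so that $\{|X_i|\le t_i\}=\{Y_i\le t_i^2\}$ and $\psi(\rho)=\P\big(Y\in\prod_i[0,t_i^2]\big)$. The gain from squaring is that $\mathrm{Cov}(Y_i,Y_j)=2\,\mathrm{Cov}(X^{(\rho)}_i,X^{(\rho)}_j)^2\ge 0$, so every cross-correlation between $\{Y_i\}_{i\le p}$ and $\{Y_j\}_{j>p}$ is non-negative and non-decreasing in $\rho$ — which removes the sign obstruction that blocks a direct FKG/association argument for the original Gaussians. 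The joint Laplace transform is $\E\exp\!\big(-\sum_i u_iY_i\big)=\det(I+2U\Sigma_\rho)^{-1/2}$ with $U=\mathrm{diag}(u_1,\dots,u_n)$, $u_i\ge 0$. Writing the density of $Y$ (hence $\psi(\rho)$, an integral of it over the box $\prod_i[0,t_i^2]$) via Laplace inversion of this determinant, and differentiating in $\rho$ using $\partial_\rho\Sigma_\rho=D$, the desired bound $\psi'(\rho)\ge 0$ reduces to the statement that the inverse Laplace transform of $\partial_\rho\det(I+2U\Sigma_\rho)^{-1/2}$ integrates to a non-negative quantity over the box — equivalently, that the squared-Gaussian (multivariate-gamma) law of $Y$ is positively associated in the required sense.

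\textbf{Main obstacle.} The entire difficulty sits in this last positivity claim about $\partial_\rho\det(I+2U\Sigma_\rho)^{-1/2}$: this is Royen's key lemma and exactly the point where earlier partial results stalled. I would establish it by exhibiting $-\partial_\rho\log\det(I+2U\Sigma_\rho)$ as a positive combination of Laplace transforms of positive measures, so that the chain rule displays $\partial_\rho f_Y$, and hence $\psi'(\rho)$, as a manifestly non-negative expression; pushing this back through Steps 2 and 1 gives the inequality. For the purposes of this paper one may, of course, simply invoke~\cite{LM17,Royen14}.
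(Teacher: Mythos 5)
The paper offers no proof of this lemma: it is the Gaussian correlation inequality, a deep external result due to Royen, and the paper simply cites \cite{Royen14} and the Lata\l a--Matlak exposition \cite{LM17}. Your Steps~1--3 are a faithful and essentially correct outline of the Lata\l a--Matlak streamlining: the reduction to finitely many symmetric slabs, the interpolation $\Sigma_\rho=\rho\Sigma+(1-\rho)\operatorname{diag}(A,B)$ with the goal $\psi'(\rho)\ge 0$, and the passage to the squared variables $Y_i=(X_i^{(\rho)})^2$ whose joint Laplace transform is $\det(I+2U\Sigma_\rho)^{-1/2}$. One small caveat in Step~3: having nonnegative cross-covariances among the $Y_i$ does not by itself give positive association (squared Gaussians are not an FKG system in general), so the phrase ``removes the sign obstruction'' should be read only as motivation, not as a shortcut around the computation.

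The genuine gap is the one you yourself flag: the ``Main obstacle'' paragraph is a description of what must be shown, not a proof of it. The entire mathematical content of the theorem is concentrated in the assertion that $\partial_\rho\psi(\rho)\ge 0$, which in Royen's argument rests on an explicit expansion of the multivariate-gamma density in terms of Laguerre-type series with nonnegative coefficients (and in Lata\l a--Matlak on a careful manipulation of the Laplace-inverted determinant). Saying ``I would establish it by exhibiting $-\partial_\rho\log\det(I+2U\Sigma_\rho)$ as a positive combination of Laplace transforms of positive measures'' names the target but supplies none of the work; as written it is not a proof and could not be checked. Since you explicitly note that citing \cite{LM17,Royen14} is all this paper requires, your proposal is best read as a correct road map that stops exactly where the difficulty begins, which is also precisely what the paper does by citing the result rather than reproving it.
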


Lastly, we state standard bounds on the tail of the Gaussian distribution.
\begin{Lemma}[{\cite[Chapter 1.2]{AT07}}]\label{lem: tail}
 Let $Z\sim \mathcal{N}_\re (0,1)$. Then for any $x>0$:
 \[\frac 1{\sqrt{2\pi}}\left(\frac 1 x - \frac 1{x^3}\right) e^{-x^2/2} \le  \p(Z>x) \le \frac 1{\sqrt{2\pi}}\frac 1 x  e^{-x^2/2}.\]
In particular, for $x\ge 2$ it holds that $e^{-x^2} \le \p(Z> x)\le e^{-x^2/2}$.
% \frac 1{2x}  e^{-x^2/2}
\end{Lemma}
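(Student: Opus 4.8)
The plan is to reduce every assertion to the integral representation $\p(Z>x)=\tfrac1{\sqrt{2\pi}}\int_x^{\infty} e^{-t^2/2}\,dt$ and to bound this integral from both sides by elementary calculus. For the upper bound I would use that on $[x,\infty)$ with $x>0$ one has $1\le t/x$, so that $\int_x^{\infty} e^{-t^2/2}\,dt\le \tfrac1x\int_x^{\infty} t\,e^{-t^2/2}\,dt=\tfrac1x e^{-x^2/2}$, the last equality coming from the antiderivative $-e^{-t^2/2}$; dividing by $\sqrt{2\pi}$ gives $\p(Z>x)\le \tfrac1{\sqrt{2\pi}\,x}e^{-x^2/2}$.

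For the lower bound the key is the identity $\tfrac{d}{dt}\bigl[(\tfrac1{t^3}-\tfrac1t)e^{-t^2/2}\bigr]=(1-\tfrac3{t^4})e^{-t^2/2}$, valid for $t>0$, whose right-hand side is $\le e^{-t^2/2}$. I would integrate this pointwise inequality over $[x,\infty)$ and use $(\tfrac1{t^3}-\tfrac1t)e^{-t^2/2}\to 0$ as $t\to\infty$ to obtain $\int_x^{\infty} e^{-t^2/2}\,dt\ge (\tfrac1x-\tfrac1{x^3})e^{-x^2/2}$; when $x<1$ the right side is negative and the bound is vacuous. Dividing by $\sqrt{2\pi}$ yields the stated lower bound.

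For the ``in particular'' clause with $x\ge 2$, the upper bound is immediate from $\tfrac1{\sqrt{2\pi}\,x}\le \tfrac1{2\sqrt{2\pi}}<1$. For the lower bound I would note that $x\ge 2$ forces $\tfrac1{x^3}\le \tfrac1{4x}$, hence $\tfrac1x-\tfrac1{x^3}\ge \tfrac3{4x}$, so it suffices to verify $\tfrac3{4\sqrt{2\pi}\,x}e^{-x^2/2}\ge e^{-x^2}$, i.e.\ $x\,e^{-x^2/2}\le \tfrac3{4\sqrt{2\pi}}$. Since $x\,e^{-x^2/2}$ has derivative $(1-x^2)e^{-x^2/2}$ and is therefore decreasing on $[1,\infty)$, on $[2,\infty)$ it is bounded by $2e^{-2}\approx 0.27<\tfrac3{4\sqrt{2\pi}}\approx 0.30$, which closes the argument.

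This statement is classical and entirely elementary, so I do not expect any genuine obstacle. The only point that calls for a little care is pinning down the explicit numerical constant in the last inequality $x\,e^{-x^2/2}\le \tfrac3{4\sqrt{2\pi}}$, which fails for $x$ near $1$ and is the reason the clean two-sided bound $e^{-x^2}\le \p(Z>x)\le e^{-x^2/2}$ is asserted only for $x\ge 2$.
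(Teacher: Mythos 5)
Your proof is correct. The paper itself gives no argument here: Lemma~\ref{lem: tail} is stated as a citation to Adler and Taylor's book, so there is no internal proof to compare against. Your derivation is the standard Mills-ratio argument (upper bound via $1\le t/x$ on $[x,\infty)$, lower bound via the exact antiderivative $(\tfrac1{t^3}-\tfrac1t)e^{-t^2/2}$ whose derivative is $(1-\tfrac3{t^4})e^{-t^2/2}\le e^{-t^2/2}$), and the numerical check $2e^{-2}\approx 0.27<\tfrac3{4\sqrt{2\pi}}\approx 0.30$ correctly closes the ``in particular'' clause for $x\ge 2$. Nothing to fix.
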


\section{Lower bounds}\label{sec:lower bound}
This section is dedicated to the proof of Proposition~\ref{prop:lb}.
Assume, without loss of generality,  that $X=1$, $0<\ep<\frac 1 {10}$ and $T>1$.  Observe that there must exist $a\in (1-\ep,1+\ep)$ such that   
		\begin{align} \label{eq:mu I}
		    \mu\left(\left[a-\tfrac{1}{10 T},a+\tfrac{1}{10 T}\right]\right) \geq \tfrac{1}{ 10 \ep T}\, \mu([1-\ep,1+\ep]).
		\end{align}
  Fix such $a$ and denote $$I=\left[a-\tfrac{1}{10 T},a+\tfrac{1}{ 10 T}\right] \text{ and } \pm I=I\cup -I.$$
Take an orthonormal basis $\{f_n\}_{n=1}^{\infty}$ for $\Lsym$, with
$f_1 = (\mu\left(\pm I\right))^{1/2} \mathbbm{1}_{\pm I}.$
From Lemma \ref{lem:series_repn} we obtain that $$F(x) = \sum_{n=1}^{\infty} \xi_n \widehat{f}_n (x), \text{ where } \xi_n \overset{\text{i.i.d.}}{\sim} \cN(0,1).$$
Denote $G:=F-\xi_1 \widehat{f_1}$ and observe that $G$ is independent of $\xi_1$.
We compute,
\begin{align*}
    \sqrt{2\mu (I)} \widehat{f_1}(x) %= \int_{\pm I} e^{-i\lm x} d\mu(\lm)
    &=2\int_{I}\cos(\lm x)d\mu(\lm) =2 \mu(I) \cos(ax)+  2\int_{I} \left(\cos(\lm x)-\cos(a x)\right) d\mu(\lm), \\
    -\sqrt{2\mu (I)} \widehat{f_1}'(x) %= \int_{\pm I} e^{-i\lm x} d\mu(\lm)
    &=2\int_{I}\lambda\sin(\lm x)d\mu(\lm) =2 a \mu(I) \sin(ax)+  2\int_{I} \left(\lambda\sin(\lm x)-a\sin(ax)\right) d\mu(\lm).
\end{align*}
Since both $\sin$ and $\cos$ are 1-Lipschitz, we conclude that for any $|x|\le T$, 
\begin{align*}\label{eq: close to cos}
\left| \frac{\widehat{f_1}(x)}{\sqrt{2\mu(I)}}  -\cos(ax)\right|&\le
\frac 1{\mu(I)} \int_I |\lm-a||x|  d\mu(\lm)  < \frac{1}{10}, \quad\text{and}\\
\left| \frac{\widehat{f_1}'(x)}{a\sqrt{2\mu(I)}} + \sin(ax)\right|&\le
\frac 1{a \mu(I)} \int_I |\lm-a|(|a x |+1)  d\mu(\lm)  \le \frac 1 {10} \left(1 + \frac{2}{1-\ep}\right) <\frac{4}{10},
%\ep A|{x}/{T}|.
\end{align*}
where we used the fact that $|\lm-a|\le \frac{1}{10T}$ for all $\lm\in I$.
Observe that $G$ is almost surely continuously differentiable and that $G'$ is a centred Gaussian process, by our assumption  $\int\lambda^{2}\log^2{\lambda} d\mu(\lambda)<\infty$.
Given $L>0$, define the events
\[
E_1 = \left\{ \sqrt{2\mu(I)}\xi_1  \geq 10L\right\}, \quad E_2=\left\{\sup_{[0,T]} |G|\leq L\right\}, \quad E_3=\left\{\sup_{[0,T]} |G'|\leq aL\right\},
\]
and observe that $E_1$ is independent from $E_2$ and $E_3$.
We deduce that on the event $E_1\cap E_2\cap E_3$, we have
\begin{align*}
\left|\frac{F(x)}{\sqrt{2\mu(I)}\xi_1}- \cos(ax)\right|&=
\left|\frac{\widehat f_1(x)\xi_1}{\sqrt{2\mu(I)}\xi_1}+
\frac{G(x)}{\sqrt{2\mu(I)}\xi_1}-\cos(ax)\right|\\
&\le 
\left|\frac{\widehat f_1(x)}{\sqrt{2\mu(I)}}- \cos(ax)\right|+\left|\frac{G(x)}{\sqrt{2\mu(I)}\xi_1}\right| <  \frac12.\\ %%%%%%%%%%%%%%%
\left|\frac{F'(x)}{a\sqrt{2\mu(I)}\xi_1}+\sin(ax)\right|&=
\left|\frac{\widehat{f_1}'(x)\xi_1}{a\sqrt{2\mu(I)}\xi_1}+
\frac{G'(x)}{a\sqrt{2\mu(I)}\xi_1}+\sin(ax)\right|\\
& \leq
\left|\frac{\widehat{f_1}'(x)}{a\sqrt{2\mu(I)}}+ \sin(ax)\right|+\left|\frac{G'(x)}{a\sqrt{2\mu(I)}\xi_1}\right| < \frac{1}2.
\end{align*}
Denote $S_1= \left(\tfrac{\pi}{a}\Z+\left[-\tfrac{\pi}{4a},\tfrac{\pi}{4a}\right]\right)\cap [0,T]$ and 
$S_2= \left(\tfrac{\pi}{a}\Z+\left[\tfrac{\pi}{4a},\tfrac{3\pi}{4a}\right]\right)\cap [0,T]$, and observe that $S_1\cup S_2= [0,T]$.
We conclude that on the event $E_1\cap E_2\cap E_3$ we have
\begin{align}
\label{eq: S1}\forall x\in S_1&: \quad  \left|\frac{F(x)}{\sqrt{2\mu(I)}\xi_1}\right|>\cos(\tfrac \pi 4)-\tfrac1 2
\\
\label{eq: S2}\forall x\in S_2&: \quad \left|\frac{F'(x)}{a\sqrt{2\mu(I)}\xi_1}\right|>\sin(\tfrac \pi 4)-\tfrac{1}2 %= \frac{\sqrt{2}-1}{2}
\\
\label{eq: sgn} \forall k\in \Z\cap [0,T]&: \quad \sgn\left( F(\tfrac{k\pi}{a}) \right)=(-1)^{k}.
\end{align}
%Hence $F$ has no zeroes in $S_1$, and exactly one zero in each interval of $S_2$, except, perhaps for the first and last. 
By \eqref{eq: S1}, $F$ has no zeros in $S_1$. By~\eqref{eq: sgn}, it has at least one zero in each interval of $S_2$ except the last, while by~\eqref{eq: S2}, it cannot have more than one zero in each interval of $S_2$. Since
there are $\lfloor\frac{aT}{\pi}\rfloor$ such intervals, and we deduce that
$\frac {N(T)}{T} \in [\frac{1-2\ep}{\pi},\frac{1+2\ep}{\pi}]$ for sufficiently large $T$. Therefore
\begin{equation}\label{eq: lower}
\p \left( \frac{N(T)}T \in \left[\frac{1-2\ep}{\pi} ,\frac{1+2\ep}{\pi} \right] \right) \ge \p(E_1\cap E_2\cap E_3) = \p (E_1) \p( E_2\cap E_3 ). 
\end{equation}
We are thus left with choosing $L$ and obtaining a bound on $\p (E_1)$ and $ \p( E_2\cap E_3 )$.

Writing $\kappa=\frac{ \ep}{10\mu([1-\ep,1+\ep])}$ and using~\eqref{eq:mu I} together with the Gaussian tail bound in Lemma~\ref{lem: tail}, we get, for sufficiently large $T$,
\[
\p(E_1)= \p \left(\xi_1\ge 5 \sqrt{\tfrac{2}{\mu(I)}} L\right) \ge 
\p\left(\xi_1 \ge 5\sqrt{ 2\kappa T} L\right)\ge e^{-50 L^2 \kappa   T}.
\]
In addition, we observe that there exists some $c_{1,L}>0$ which depends on $\mu$, for which  
\[
\p(E_2 ) = \p\left(\sup_{[0,T]} |G|\leq L \right) \ge \p\left(\sup_{[0,T]} |F|\leq L \right) \ge e^{-c_{1,L}T},
\]
where the first inequality follows from Anderson's inequality (Lemma~\ref{lem:ander}) and the second one follows from the ball estimate of Lemma~\ref{lem: FFN ball}. From the latter lemma and from the fact that $\lim_{\ell\to\infty} \p\left( c |F(0)|\le \ell \right)=1$ we also deduce that $\lim_{L\to \infty}c_{1,L}=0$.

Similarly we obtain the existence of $c_{2,L}$ which satisfies $\lim_{L\to \infty}c_{2,L}=0$, for which
\[
\p(E_3 ) = \p\left(\sup_{[0,T]} |G'|\leq aL \right) \ge \p\left(\sup_{[0,T]} |F'|\leq aL \right) \ge e^{-c_{2,L}T}.
\]
Since $G, G'$ are almost surely continuous, we may apply the Gaussian correlation inequality (Lemma~\ref{lem:GCI}) to obtain that
$\p(E_2\cap E_3)\ge \p(E_2)\p(E_3).$ 
Setting $L=\kappa^{-1/3}$ and plugging the estimates we obtained into~\eqref{eq: lower} we get
\[
\p \left( \frac{N(T)}T \in \left[\frac{1-2\ep}{\pi} ,\frac{1+2\ep}{\pi} \right] \right) \ge 
e^{-(50L^{-1}+c_{1,L}+c_{2,L}) T}.
\]
To obtain the moreover part, apply this for arbitrarily small $\ep=\delta$, and notice that $L=L(\delta)$ is then arbitrarily large. %so that $L^{-1}$, $c_{1,L}$ and $c_{2,L}$ are arbitrarily close to $0$. 
\qed

\section{Upper bound on overcrowding probabilities}\label{sec:over}

In this section we prove the first part of Theorem \ref{thm_overcrowding}, which gives a Gaussian upper bound on certain overcrowding probabilities. Note that the second part of Theorem~\ref{thm_overcrowding} follows readily from Proposition~\ref{prop:lb}.

Throughout the remainder of the paper, we let $F$ be a fixed centred SGP with spectral measure $\mu$ supported on $[-A,A]$, which satisfies $\muac \not\equiv 0$.
Firstly we make the observation that, since $\mu$ is compactly supported, $F$ is almost surely real analytic, so that for all $t\in \re$ we have
\begin{align}\label{eq:real_analytic}
F(t) = \sum_{k=0}^\infty \frac{F^{(k)}(0)t^k}{k!}.
\end{align}
This formula naturally extends $F$ to an entire function on $\comp$. By a slight abuse of notation, we denote this extended function also by $F$. 

In order to apply the Jensen's formula to $F$, we require the following proposition.

\begin{Prop}\label{new_lemma}
$F$ is almost surely in Cartwright's class and of exponential type at most $A$.
\end{Prop}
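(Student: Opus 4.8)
The plan is to verify the two defining conditions of membership in $\mathcal{C}$ separately. For the exponential type I would work directly with the power series \eqref{eq:real_analytic} and bound its coefficients $F^{(k)}(0)$; for the integral condition \eqref{eq:defn_cartwright} I would use the almost-sure linear growth of $F$ on the real line supplied by Lemma~\ref{lem:lineargrowth}.

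First I would control the Taylor coefficients at the origin. By \eqref{eq:der_of_SGP} (and the accompanying description of the spectral measure of $F^{(k)}$), $F^{(k)}(0)$ is a centred Gaussian with variance $\sigma_k^2 = \int_\re \lambda^{2k}\,d\mu(\lambda) \le A^{2k}$, using $\sprt(\mu)\subseteq[-A,A]$ and $\mu(\re)=1$. Hence, by the Gaussian tail bound (Lemma~\ref{lem: tail}), $\P\big(|F^{(k)}(0)| > A^k(k+1)\big) \le \P\big(|Z| > k+1\big)$, and the right-hand side is summable in $k$. Borel--Cantelli then yields a random constant $C = C(\omega) \ge 1$ such that, almost surely, $|F^{(k)}(0)| \le C\,A^k(k+1)$ for every $k\ge 0$.

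Next, feeding this estimate into \eqref{eq:real_analytic} --- which also re-confirms that the series converges throughout $\comp$ --- gives, for all $z\in\comp$,
\[
|F(z)| \;\le\; C\sum_{k\ge 0}\frac{(k+1)A^k|z|^k}{k!} \;=\; C\,(1+A|z|)\,e^{A|z|}.
\]
Since the polynomial prefactor does not affect the growth exponent, $F$ is entire of exponential type at most $A$. Finally, for the integral condition, Lemma~\ref{lem:lineargrowth} gives, almost surely, a random $M$ with $|F(t)|\le M(1+|t|)$ for all $t\in\re$; then $\log^+|F(t)| \le \log^+ M + \log 2 + \log(1+|t|)$, so
\[
\int_\re \frac{\log^+|F(t)|}{1+t^2}\,dt \;\le\; \int_\re \frac{\log^+ M + \log 2 + \log(1+|t|)}{1+t^2}\,dt \;<\;\infty .
\]
Intersecting the two almost-sure events then shows $F\in\mathcal{C}$ with exponential type $\le A$ almost surely.

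I do not expect a genuine obstacle here: the proposition is essentially a preliminary reduction. The only points that need a moment of care are the legitimacy of the termwise bound in the third step (ensured by the coefficient estimate of the second step, which in fact yields the entirety of $F$ on its own) and the need to work on the intersection of the two null-exceptional sets, so that a single realization enjoys both the coefficient bound and the linear growth.
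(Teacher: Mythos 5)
Your argument is correct and takes essentially the same route as the paper: bound the Taylor coefficients $F^{(k)}(0)$ using $\sprt(\mu)\subseteq[-A,A]$, apply a Gaussian tail estimate and Borel--Cantelli to obtain an almost-sure bound of the form $|F^{(k)}(0)|\le C\,A^k\cdot(\text{poly in }k)$, sum the series to get $|F(z)|\le (\text{linear in }|z|)\cdot e^{A|z|}$ and hence exponential type at most $A$, and invoke Lemma~\ref{lem:lineargrowth} for the integral condition \eqref{eq:defn_cartwright}. The only cosmetic difference is that you use the coefficient bound $A^k(k+1)$ uniformly in $k$, while the paper uses $kA^k$ for $k$ large and absorbs the finitely many small $k$ into a separate random constant; both yield the same conclusion.
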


Using this proposition we develop the following probabilistic estimate.

\begin{Prop} \label{prop:key} There exists $c>0$ such that for all $T>e^8$,
\begin{align*}
	\P\left(\forall a\in [0,T],\: r \in (0,T): \:\frac{1}{2\pi}	\int_{0}^{2\pi} \log |F(a+re^{i\theta})| d\theta \leq  \frac{A}{\pi} \cdot 2r + 6 \log T \right)\ge 1-e^{-cT^2}.
\end{align*}
\end{Prop}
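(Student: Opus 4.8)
The plan is to combine the pointwise bound on $\log|F|$ coming from Corollary~\ref{cor:analogue_of_pl} with the linear-growth bound on $|F|$ along $\re$ from Lemma~\ref{lem:lineargrowth}, and then control the resulting integral uniformly over all centres $a\in[0,T]$ and radii $r\in(0,T)$. More precisely, by Proposition~\ref{new_lemma}, $F$ is almost surely in Cartwright's class of exponential type at most $A$, so Corollary~\ref{cor:analogue_of_pl} applies: for $z = x+iy$ with $y \ne 0$,
\[
\log|F(z)| \le \frac{|y|}{\pi}\int_\re \frac{\log^+|F(t)|}{|t-z|^2}\,dt + A|y|.
\]
On the favourable event $\Omega_M := \{\forall s\in\re: |F(s)|\le M(1+|s|)\}$, which by Lemma~\ref{lem:lineargrowth} has probability at least $1 - e^{-cM^2}$, we have $\log^+|F(t)| \le \log M + \log(1+|t|)$.

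First I would estimate, for a point $z = a + re^{i\theta}$ with $a\in[0,T]$, $r\in(0,T)$ and $y = r\sin\theta$, the quantity $\frac{|y|}{\pi}\int_\re \frac{\log M + \log(1+|t|)}{|t-z|^2}\,dt$. Splitting the numerator, the $\log M$ term contributes exactly $\log M$ (since $\frac{|y|}{\pi}\int_\re \frac{dt}{|t-z|^2} = 1$, the Poisson kernel normalisation), and the $\log(1+|t|)$ term is bounded, via the Poisson integral of a slowly growing function, by something like $\log(1+|a|) + \log(1+r) + O(1)$, because $\log(1+|t|)$ is subadditive and the Poisson average of $\log(1+|t-a|)$ over the circle is $O(\log(1+r) + 1)$. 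Since $a\le T$ and $r\le T$, this whole term is $\le C\log T$ for an absolute constant $C$ and $T$ large. Hence on $\Omega_M$, for every such $z$ with $\sin\theta\ne 0$,
\[
\log|F(a+re^{i\theta})| \le \log M + C\log T + A r|\sin\theta|.
\]

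Next I would integrate this over $\theta\in[0,2\pi]$. The term $\log M + C\log T$ integrates to $2\pi(\log M + C\log T)$, and $\int_0^{2\pi} Ar|\sin\theta|\,d\theta = 4Ar$, so dividing by $2\pi$,
\[
\frac{1}{2\pi}\int_0^{2\pi}\log|F(a+re^{i\theta})|\,d\theta \le \frac{2A}{\pi}r + \log M + C\log T.
\]
(The exceptional set $\theta\in\{0,\pi\}$ where $\sin\theta=0$ has measure zero, and since $\log|F|$ is locally integrable on $\re$ almost surely — $F$ is entire and a.s. not identically zero, and one can bound the integral near real zeros using Jensen — it does not affect the integral; alternatively one notes $\log|F(a+re^{i\theta})|$ for $\theta$ near $0$ is controlled by $\log^+|F(a\pm r)| \le \log M + \log(1+T)$.) Now I would choose $M = M(T)$, say $M = T$, so that $\log M = \log T$, giving the bound $\frac{2A}{\pi}r + (C+1)\log T \le \frac{2A}{\pi}r + 6\log T$ once the absolute constant is absorbed (re-examining the Poisson-integral constant to confirm $C+1 \le 6$ for $T > e^8$, or else adjusting the threshold; here I would be slightly careful and may need $T$ large enough that lower-order additive constants are swallowed by $\log T$, which is exactly the role of the hypothesis $T > e^8$). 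With $M = T$ the failure probability is $e^{-cT^2}$, as desired.

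The main obstacle I anticipate is twofold. First, making the Poisson-integral estimate of the $\log(1+|t|)$-contribution genuinely uniform over all $a\in[0,T]$ and $r\in(0,T)$ with an explicit absolute constant — one must handle the regime where $z$ is close to $\re$ (small $|\sin\theta|$, where the kernel $|y|/|t-z|^2$ concentrates near $t = \mathrm{Re}(z)$) and check that the bound degrades only logarithmically, not worse. Second, the possibility that $\mathrm{Re}(z) = a + r\cos\theta$ lands near a real zero of $F$, making $\log|F|$ very negative there; this is harmless for an \emph{upper} bound (negative values only help), but one must ensure $\log|F(a+re^{i\theta})|$ is integrable in $\theta$, which follows because $F$ is a.s. entire and not identically zero so its zeros on the relevant compact set are isolated and $\log|F|$ is integrable against arc length. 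A clean way to sidestep all integrability worries is to note we only ever need the inequality to hold after integration, and to invoke Jensen's formula (Theorem~\ref{Jensens}) together with the fact that the left-hand side equals $\int_0^R \frac{n_F(a,t)}{t}dt + \log|F(a)| \ge \log|F(a)| > -\infty$ a.s., combined with the a.s. finiteness established by the pointwise bound on the set $\{\sin\theta\ne 0\}$. I would present the argument so that the event is the intersection of $\Omega_T$ with the a.s. event that $F$ is entire, not identically zero, and in Cartwright's class of type $\le A$ (the latter from Proposition~\ref{new_lemma}), and conclude the stated probability bound.
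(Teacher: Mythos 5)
Your proposal follows essentially the same route as the paper's proof: invoke Proposition~\ref{new_lemma} to place $F$ in Cartwright's class of type $\le A$, apply Corollary~\ref{cor:analogue_of_pl}, work on the linear-growth event $\{\forall t:\ |F(t)|\le M(1+|t|)\}$ from Lemma~\ref{lem:lineargrowth}, split the Poisson integral into the $\log M$ piece (which gives exactly $\log M$) and the $\log(1+|t|)$ piece (bounded by $O(\log T)$ uniformly for $|z|\le 2T$), integrate in $\theta$, and choose $M=T$ to match the $e^{-cT^2}$ probability. The paper's version is simply a more explicit instance of the same estimate (tracking the Poisson-integral constant as $4(2+\log T)$ so that $T\ge e^8$ absorbs everything into $6\log T$), and your remarks on the measure-zero set $\theta\in\{0,\pi\}$ and local integrability of $\log|F|$ are correct and make the argument marginally more self-contained.
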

The proof of both propositions is provided in Section~\ref{sec:analytic proof} below.
Equipped with them we are now ready to prove the theorem itself.

\subsection{Proof of the upper bound in Theorem \ref{thm_overcrowding}} \label{sec:proof_of_OC}
			%\begin{proof}[Proof of Theorem \ref{thm_overcrowding} (1)] 
Let $T\ge e^8$.
%Let $b_1,b_2, c$ be universal constant, which will be specified later.
Given $\ep\in \left[\frac 1 {\sqrt T}, e^{-3}\right]$, write 
\begin{equation}\label{eq: notes}
\Delta=\ep^2T, \:\: r=\ep T \text{  and  } n=\left\lceil{\frac{T+2r}{\Delta}}\right\rceil = \left\lceil \frac 1{\ep^2} +\frac 2 \ep\right\rceil.
\end{equation}
Let $x_1,\dots, x_n \in [0,T]$ be such that $$x_k\in I_k:=[(k-1)\Delta -r,k\Delta-r]$$ and assume that $F(x_k)\neq 0$ for all $k\in [n]$. By Jensen's formula (Theorem~\ref{Jensens}), for all $r>0$ we have
\begin{equation}\label{eq:Jen's app}
\sum_{k\in [n]}\int_{0}^{r} \frac{n_{F}(x_k,t)}{t} dt =   \sum_{k\in [n]} \frac 1{2\pi} \int_0^{2\pi} \log|F(r e^{i\theta}+x_k)| d\theta - \sum_{k\in [n]}\log|F(x_k)|.
\end{equation}
We proceed in three steps. Firstly, we relate the left-hand-side of \eqref{eq:Jen's app} to $N_{F}(T)$. Next, we use Proposition~\ref{prop:key} to replace the first term in the right-hand-side with the desired bound and show that with high probability there exists a good collection of points $x_1,\dots,x_n$ for which the second term of the right-hand-side is negligible. Finally, we combine these to obtain the desired upper bound.
\vskip 2mm
\noindent \textbf{Step 1: Relating the left-hand-side of \eqref{eq:Jen's app} and the number of real zeros.} 
Observe that for all $x\in [0,T]$, $t>0$ we have $$|\{k\in[n]:\ x\in [x_k-t,x_k+t]\}|\ge \left\lceil\frac{2t}{\Delta}\right\rceil-2.$$ In particular, this holds for values of $x$ which are real zeros of $F$.
Denoting the set of real zeros of $F$ by $\mathscr{Z}$, we therefore have 
\begin{align*}
\sum_{k\in [n]} n_F(x_k,t) \ge 
\sum_{k\in [n]}\sum_{w\in \mathscr{Z}}\mathbbm{1}_{(x_k - t, x_k +t)}(w) \ge
\left(\left\lceil\frac{2t}{\Delta}\right\rceil-2\right) N_{F}(T) 
\end{align*}
Integrating both sides against $\tfrac{1}{t}$ yields 
\begin{align}\label{eq: relation to zero count}
\sum_{k\in [n]} \int_{0}^{r} \frac{n_F(x_k,t)}{t}  dt &\ge 
N_{F}(T)\int_{\Delta}^{r} \frac{1}{t}\left(\left\lceil\frac{2t}{\Delta}\right\rceil-2\right)dt \notag
\\ & \ge
 2N_F(T)\lb\frac{r-\Delta}{\Delta}-\log\lb\frac{r}{\Delta}\rb\rb \notag \\ &=
 2N_F(T) \lb\frac 1 {\ep} -1 -\log \left(\frac 1 \ep\right)\rb \notag \\ 
& \geq 2 N_F(T) \lb \frac 1 \ep - 2\log \lb\frac{1}{\ep}\rb\rb,
\end{align}
where we used~\eqref{eq: notes} and the fact that $\ep<e$. 
%The last inequality used the fact that $n = \lceil \tfrac 1{\ep^2} +\tfrac 2 \ep\rceil$ and $\ep\in (0,1)$.
\vskip 2mm
\noindent
\textbf{Step 2: Bounding the right-hand-side of~\eqref{eq:Jen's app}.} 
By Proposition~\ref{prop:key}, there exists $c_1>0$ such that the event
\[E_1:=\left\{\sum_{k\in [n]} \frac 1 {2\pi} \int_0^{2\pi} \log|F(r e^{i\theta}+x_k)| d\theta\le \frac{A}{\pi}\cdot 2r n+  6 n \log T\right\}\]
satisfies
\begin{equation}\label{eq: E1}
 \P(E_1)\ge 1-e^{-c_1 T^2}.
 \end{equation}
By Lemma~\ref{lem:smallballprob}, there exist $C,c_2>0$ such that if $\ep^2 T>1$ we have
\[
\p\left(\sup_{I_k} |F| \leq e^{-C\ep^2 T}\right)
=\p\left(\sup_{[0,\ep^2 T]} |F| \leq e^{-C\ep^2 T}\right)
\leq e^{-c_2 \ep^4 T^2}.
\]
Writing 
\[E_2:=\left\{\forall k\in [n]: \sup_{I_k} |F| \geq e^{-C\ep^2 T}\right\},\]
taking a union bound over $k\in[n]$, and using the fact that
$
n\le \frac 1{\ep^2}+\frac 2 \ep +1 \le \left( \sqrt{T} +1\right)^2 \le 2T$, we obtain 
\begin{equation}\label{eq: E2}
\p\left(E_2\right) \ge 1- n e^{-c_2 \ep^4 T^2}\ge 1- 2T e^{-c_2 \ep^4 T^2}.
\end{equation}

On the event $E_2$, we can select the points $\{x_k\}_{k\in[n]}$ which satisfy $|F(x_k)| \ge e^{-C\ep^2 T}$.
Thus, on the event $E_1\cap E_2$, we can bound the right-hand-side of~\eqref{eq:Jen's app} by
\begin{equation}\label{eq: step2}
 \sum_{k\in [n]} \frac1{2\pi}\int_0^{2\pi} \log|F(r e^{i\theta}+x_k)| d\theta - \sum_{k\in [n]}\log|F(x_k)|
 \le \frac {A}{\pi} \cdot 2rn +  6 n \log T + Cn \ep^2 T.
\end{equation}
\vskip 2mm
\noindent
\textbf{Step 3: Obtaining the probabilistic bound.} 
Plugging \eqref{eq: relation to zero count} and \eqref{eq: step2} into \eqref{eq:Jen's app} we obtain that, on the event $E_1\cap E_2$, we have
\[ 
2\lb \tfrac{1}{\ep} - 2\log(\tfrac{1}{\ep}) \rb N_F(T)\le \frac{A}{\pi}\cdot 2rn  +6n \log T +Cn\ep^2 T.
\]
% $n\ep^2 \le 1+3\ep$
Recalling~\eqref{eq: notes} and the fact that $n\ep^2 \le 1+3\ep\le 2$, we conclude that on $E_1\cap E_2$,
\begin{align*}
{(1-2\ep \log(\tfrac{1}{\ep}))} N_F(T) &\le \frac{A}{\pi} T(1+3\ep) +  3 (\tfrac 1 \ep +3)\log T + C\ep T\\
&
\le  \frac{A}{\pi} T  + D \ep T + D \tfrac 1 \ep \log T,
\end{align*}
where $D>0$ is some constant. 
In what follows we use $D$ to denote constants which may change from line to line.
Since $(1-2\ep \log(\tfrac{1}{\ep}))^{-1} \leq 1 +3\ep \log(\tfrac{1}{\ep})$ for $\ep<e^{-3}$, we get that on $E_1\cap E_2$,
\begin{align*}
N_F(T) &\le \frac{A}{\pi}T + D \ep \log\left( \tfrac 1 \ep\right)T +  {D\tfrac 1 \ep \log T}
\\ & \le \frac{A}{\pi}T + D \ep \log\left( \tfrac 1 \ep\right)T,
\end{align*}
{where in the last inequality we used that $\ep>\frac{1}{\sqrt{T}}$.}
By~\eqref{eq: E1} and~\eqref{eq: E2} we have $\p (E_1\cap E_2) \ge 1- e^{-c \ep^4 T^2}$ for some $c>0$.
Changing variables by defining $\delta = D\ep\log\left(\tfrac 1 \ep\right)$, we conclude that
\[
\P \left(N_F(T)\le \tfrac{A}{\pi }T+\delta T\right) \ge \P(E_1\cap E_2) \ge 1- \exp\left(-c \left(\delta/\log \delta \right)^4 T^2\right),
\]
where $\delta \in [d_1 \log T / \sqrt T,d_2]$, for some constants $d_1$, $d_2>0$.
\qed

\subsection{Proof of Propositions~\ref{new_lemma} and~\ref{prop:key}}\label{sec:analytic proof}
Here we establish Proposition~\ref{new_lemma} and use it together with tools from Sections \ref{subs: perlim: entire} and \ref{subs: perlim: gaussian}
to establish Proposition~\ref{prop:key}.

\begin{proof}[Proof of Proposition~\ref{new_lemma}]%[Proof of Lemma \ref{new_lemma}]	
 We note that the integral condition~\eqref{eq:defn_cartwright} holds by Lemma~\ref{lem:lineargrowth}, which ensures that, almost surely, $F$ grows at most linearly on $\R$.
It remains to show that the exponential type of $F$ is at most $A$.
Recall that $F$ has the power series expansion~\eqref{eq:real_analytic}:
		\begin{align*}
		F(z) = \sum_{k=0}^\infty \frac{F^{(k)}(0)}{k!} z^k, \quad \forall z\in \comp.  
		\end{align*}
Note that, for $k \in \nat$, 
%define $C_k := \int_{\re}|\lambda|^k d\mu(\lambda)$. Then 
 we have $F^{(k)}(0) \sim \mathcal{N}(0, {C_{2k}})$,
 where $C_{2k} := \int_{\re}|\lambda|^{2k} d\mu(\lambda)$. Using the fact that $\sprt(\mu)\subseteq [-A,A]$, we obtain $C_{2k} \leq A^{2k}$. An application of Lemma~\ref{lem: tail} yields for $k\ge 2$:
		\begin{align*}
		\p(|F^{(k)}(0)| \geq kA^k) \leq \p(|F^{(k)}(0)| \geq k \sqrt{C_{2k}}) \leq \exp(-k^2/2).
		\end{align*}
		Hence, almost surely, there exists $m \in \nat$ such that $|F^{(k)}(0)| \leq kA^k$ for every $k \geq m+1$. Denoting $C=\max_{k\le m} |F^{(k)}(0)/A^k|$, we obtain
		\begin{align*}
		|F(z)| &\leq \sum_{k=0}^{m} |F^{(k)}(0)| \frac{|z|^k}{k!} + \sum_{k=m+1}^{\infty} kA^k \frac{|z|^k}{k!}\\
		& \leq C \sum_{k=0}^{m} A^k \frac{|z|^k}{k!} + A|z|  \sum_{k=m}^{\infty} A^{k} \frac{|z|^{k}}{k!}\\
  &\leq (C+A|z|) \exp({A|z|}).
		\end{align*}
For any $\ep>0$, there exists $C_\ep>0$ such that $C+A|z| \leq C_\ep e^{\ep |z|}$ for all $z\in\comp$, which, in turn, implies $|F(z)|\le C_\ep e^{(A+\ep)|z|}$. Recalling 
the definitions in Section~\ref{sec: story up},
%following~\eqref{eq:exp_type}, 
we deduce that $F$ is a.s. of exponential type at most $A$.
\end{proof}
\begin{proof}[Proof of Proposition~\ref{prop:key}]
With Proposition~\ref{new_lemma} at hand, apply Corollary~\ref{cor:analogue_of_pl} to get that 
for every $z=x+iy \in \comp$, we have
 \begin{align}\label{logmodf}
\log |F(z)| \leq \frac{|y|}{\pi} \int_{\re} \frac{\log^{+}|F(t)|}{|t-z|^2} dt + A |y|. 
\end{align}
Write 
\[
E_M = \{\forall t\in \re\,:\,|F(t)| \leq M(1+|t|)\}.
\]
By Lemma~\ref{lem:lineargrowth}, there exist $c>0$ such that 
\begin{equation}\label{eq: E_M}
    \P(E_M)\ge 1-e^{-c M^2},
\end{equation}  for sufficiently large values of $M$. 
Using the fact that $\frac y \pi \int_{\re} \frac 1 {|t-z|^2} dt =1$, we obtain from~\eqref{logmodf} that on the event $E_M$,
we have 
\begin{equation}\label{eq: logF UB}
\log |F(z)|\le \log M +  \frac{|y|}{\pi} \int_{\re} \frac{\log(1+|t|)}{|t-z|^2} dt+A|y|,  \quad \forall z\in \comp.
\end{equation}
For $y\ne 0$ and $|z|\le 2T$, we obtain
		\begin{align}\label{eq: int}
		  \frac{|y|}{\pi} \int_{\re} \frac{ \log(1+|t|)}{(t-x)^2 + y^2} dt &=  \frac{|y|}{\pi} \int_{\re} \frac{ \log(1+|x+s|)}{s^2 + y^2} ds\nonumber\\
		 &\leq  \frac{|y|}{\pi} \int_{\re} \frac{ \log(1+2T+|s|)}{s^2 + y^2} ds \nonumber
   \\ & \le 
   2\log (1+2T) + \frac{2|y|}{\pi} \int_{\re} \frac{\log^{+}|s|}{s^2 + y^2} ds \nonumber
   \\ & \le 2\log (1+2T) + 2\log^{+}|y| +  2 \le 4(2+\log T),
		\end{align}
where for the second inequality we use the fact that $\log(\alpha+\beta) \leq 2(\log \alpha + \log^{+} \beta)$ for $\alpha \geq 1$ and $\beta>0$, and for the third we use the bound  
\[
\frac{|y|}{\pi} \int_{\re} \frac{\log^{+}|s|}{s^2 + y^2} ds = \frac{1}{\pi} \int_{\re} \frac{\log^{+}|ty|}{1+t^2} dt = \log^{+}|y| +  \frac{1}{\pi} \int_{\re} \frac{\log^{+}|t|}{1+t^2} dt
\le  \log^{+}|y| +1. 
\]
Plugging~\eqref{eq: int} into~\eqref{eq: logF UB}, we conclude that on the event $E_M$,
for $|z|\le 2T$ with $\text{Im}(z)\ne 0$,
%We have thus proven that for large enough $M>0$, the following holds with probability at least $(1-e^{-cM^2})$:
\begin{align}\label{eq:last_one}
		\log |F(z)| \leq A|y| + \log M + 4(2+\log T).
		\end{align}
 Thus on the event $E_M$, for all $a \in [0,T]$, $r \in (0,T)$, and $\theta \in (0,\pi) \cup (\pi,2\pi)$ we have
		\begin{align}\label{eq:last_two}
		\log |F(a + re^{i\theta})| \leq Ar |\sin \theta| +  \log M + 4(2+\log T).
		\end{align}
Set $M=T$. By integrating on $\theta \in (0,2\pi)$, we obtain that on the event $E_T$,
\[
 \forall a \in [0,T], \: r \in (0,T): \quad
 \frac 1 {2\pi}\int_0^{2\pi} \log |F(a+re^{i\theta})| \le \frac{2}{\pi} A r + 6\log T,
\]
as long as $T\ge e^{8}$. Recalling~\eqref{eq: E_M}, this completes the proof.
%If needed
%\begin{align*}
%\int_{\re} \frac{\log   (1+|t|)}{|t-z|^2} dt &= \int_{\re} \frac{\log(1+|t|)}{(t-x)^2+y^2} dt\\&\le 2\pi \frac{\log(1+R)}{|y|}+\int_{\re} 2\frac{\log^+|s|}{s^2+y^2} dt\end{align*} 
\end{proof}

\section{Upper bound on undercrowding probabilities}\label{sec:under}
This section consists of the proof Theorem \ref{thm_undercrowding}, by a reduction to Theorem \ref{thm_overcrowding}. Let $\mu$, $A$ and~$B$ be as in the statement of Theorem~\ref{thm_undercrowding}.
    We define a new measure $\nu$ as follows: for any  Borel measurable $S\subseteq \R$,
    \begin{align*}
	\tmu (S) := \mup(S+A) + \mum(S - A),
	\end{align*}
   where $\mup(S) = \mu(S \cap [0,\infty))$ and $\mum(S) = \mu(S \cap (-\infty,0)).$	
    Observe that $\tmu$ is a symmetric Borel probability measure on $\re$ supported on $[B-A, A-B]$, so that it is the spectral measure of a centered SGP on $\re$ which we denote by $G$.

	\begin{Claim}\label{claim_undercrowding} 
 The process $G$ has the representation
 %for every $x \in \re$, we have 
		\begin{align}\label{eqnn4}
		G(x) = \cos (Ax) F(x) + \sin(Ax) H(x),
		\end{align}
		where $H$ has the same distribution as $F$ (but is dependent of it).
		\end{Claim}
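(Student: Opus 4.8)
The plan is to exhibit $H$, and hence $G$, explicitly on the same probability space as $F$ via the orthonormal-basis representation of Lemma~\ref{lem:series_repn}, the role of $H$ being that of the Hilbert-transform–type conjugate companion of $F$. Concretely, we construct a coupling of $F$ with a process $H\overset{d}{=}F$ for which $\cos(A\,\cdot)F+\sin(A\,\cdot)H$ has the law of the SGP with spectral measure $\nu$.

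Fix an orthonormal basis $\{\varphi_n\}$ of $\Lsym$ and realise $F=\sum_n\zeta_n\psi_n$ with $\psi_n(t)=\int_\re e^{-i\lm t}\varphi_n(\lm)\,d\mu(\lm)$ and $\zeta_n\overset{\mathrm{i.i.d.}}{\sim}\cN(0,1)$. Since $0\notin\sprt(\mu)$ in the relevant case $B>0$ (when $B=0$ the undercrowding bound of Theorem~\ref{thm_undercrowding} is vacuous), the function $\lm\mapsto i\,\sgn(\lm)$ is $\mu$-a.e.\ unimodular and satisfies $\overline{i\,\sgn(-\lm)}=i\,\sgn(\lm)$, so multiplication by it is a unitary self-map of $\Lsym$; in particular $\{i\,\sgn(\cdot)\varphi_n\}$ is again an orthonormal basis of $\Lsym$. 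On the same probability space define
\[
H:=\sum_n\zeta_n\,\psi_n^{\sharp},\qquad \psi_n^{\sharp}(t):=\int_\re e^{-i\lm t}\,i\,\sgn(\lm)\,\varphi_n(\lm)\,d\mu(\lm).
\]
Applying Lemma~\ref{lem:series_repn} to the basis $\{i\,\sgn(\cdot)\varphi_n\}$ shows that $H$ is an SGP with spectral measure $\mu$, so $H\overset{d}{=}F$; and $H$ is built from the same coefficients $\zeta_n$, hence is dependent on $F$.

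It then remains to recognise $\cos(Ax)F(x)+\sin(Ax)H(x)=\sum_n\zeta_n\bigl(\cos(Ax)\psi_n+\sin(Ax)\psi_n^{\sharp}\bigr)$ as having the law of an SGP with spectral measure $\nu$. Splitting the defining integrals over $\{\lm>0\}$ and $\{\lm<0\}$ and using $\cos(Ax)\pm i\sin(Ax)=e^{\pm iAx}$, one finds
\[
\cos(Ax)\psi_n(x)+\sin(Ax)\psi_n^{\sharp}(x)=\int_{\{\lm>0\}}e^{-i(\lm-A)x}\varphi_n(\lm)\,d\mu(\lm)+\int_{\{\lm<0\}}e^{-i(\lm+A)x}\varphi_n(\lm)\,d\mu(\lm).
\]
Substituting $\mu'=\lm-A$ in the first integral and $\mu'=\lm+A$ in the second rewrites the right-hand side as $\int_\re e^{-i\mu'x}\chi_n(\mu')\,d\nu(\mu')$, where $\chi_n(\mu'):=\varphi_n(\mu'+A)$ for $\mu'<0$ and $\chi_n(\mu'):=\varphi_n(\mu'-A)$ for $\mu'>0$; this is precisely where the formula defining $\nu$ enters, since the pushforward of $\mup$ under $\lm\mapsto\lm-A$ is $S\mapsto\mup(S+A)$ and that of $\mum$ under $\lm\mapsto\lm+A$ is $S\mapsto\mum(S-A)$, whose sum is $\nu$. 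The same substitutions show that $\varphi\mapsto\chi$ is a unitary map of $\Lsym$ onto $L^{2}_{\mathrm{symm}}(\nu)$ (conjugate-symmetry of $\chi$ follows from that of $\varphi$ because under these shifts $\mu'\mapsto-\mu'$ corresponds to $\lm\mapsto-\lm$), so $\{\chi_n\}$ is an orthonormal basis of $L^{2}_{\mathrm{symm}}(\nu)$. Hence $\cos(Ax)F(x)+\sin(Ax)H(x)=\sum_n\zeta_n\tilde\chi_n(x)$ with $\tilde\chi_n(t)=\int_\re e^{-i\mu't}\chi_n(\mu')\,d\nu(\mu')$, and Lemma~\ref{lem:series_repn}, read as identifying any such series with the SGP of the corresponding spectral measure, yields the claim.

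I expect this to be entirely routine, the only genuine care being bookkeeping: pinning down the Fourier sign conventions (the downward shift of the $[B,A]$-block of the spectrum to $[B-A,0]$ is produced by multiplication by $e^{iAx}$, which forces the multiplier $i\,\sgn(\lm)$ in the definition of $H$ — a sign slip gives either the conjugate of $H$, harmless since it is equidistributed, or the wrong translation, which is not), and checking that the two operations used (multiplication by $i\,\sgn(\lm)$ and the glue-and-translate map $\varphi\mapsto\chi$) respect the conjugate-symmetry constraints defining $\Lsym$ and $L^{2}_{\mathrm{symm}}(\nu)$. An equivalent and perhaps more transparent route avoids bases altogether: with the spectral stochastic integral $F(x)=\int_\re e^{-i\lm x}\,dW(\lm)$, $\overline{dW(\lm)}=dW(-\lm)$, write $F=F_{+}+\overline{F_{+}}$ with $F_{+}(x)=\int_{(0,\infty)}e^{-i\lm x}\,dW(\lm)$ the positive-frequency part, and set $G:=e^{iAx}F_{+}+e^{-iAx}\overline{F_{+}}=2\,\mathrm{Re}\bigl(e^{iAx}F_{+}\bigr)$; then $G=\cos(Ax)F(x)+\sin(Ax)H(x)$ with $H:=-2\,\mathrm{Im}(F_{+})$ is immediate, the identity $H(x)=\int_\re e^{-i\lm x}\,i\,\sgn(\lm)\,dW(\lm)$ shows $H$ is real with spectral measure $\mu$, and the spectral measure of $G$ is read off as $\nu$ after translating the two frequency blocks.
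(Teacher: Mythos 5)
Your argument is correct and follows essentially the same route as the paper: both use the orthonormal-basis representation of Lemma~\ref{lem:series_repn}, define $H$ via the $i\,\sgn(\lambda)$ multiplier on the basis (which the paper spells out as $i\bigl(f_n\mathbbm{1}_{[0,\infty)}-f_n\mathbbm{1}_{(-\infty,0)}\bigr)$, coinciding with yours $\mu$-a.e.\ once $B>0$), and relate the bases of $\Lsym$ and $L^{2}_{\mathrm{symm}}(\nu)$ by the same shift-and-glue map after splitting $e^{\pm iAx}=\cos(Ax)\pm i\sin(Ax)$ over positive and negative frequencies. Your closing remark via the spectral stochastic integral and the positive-frequency part $F_{+}$ is a tidier repackaging of the identical computation rather than a genuinely different argument.
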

	
 First, we use this to prove Theorem \ref{thm_undercrowding} and then establish the claim itself. 
     Note that, by~\eqref{eqnn4}, 
	\begin{align*}
	G({k \pi}/{A}) = (-1)^k  F(k\pi/A), \quad \text{for any } k\in \mathbb{Z}.
	\end{align*}
	By continuity, we deduce that if $G$ has no zeros in the interval $[k_0\tfrac \pi A, (k_0+1)\tfrac \pi A]$ for some $k_0\in \Z$, then $F$ must have at least one zero in that interval. Since both $F$ and $G$ are non-constant SGPs, their zero set is almost surely disjoint from the lattice $(\pi/A)\mathbb{Z}$. We conclude that 
    \begin{equation}\label{eq:undc_402} 
        N_F(T)\ge \left\lfloor\frac{A T}{\pi}\right\rfloor - N_G(T).
    \end{equation}
	Since $G$ satisfies all the assumptions of Theorem \ref{thm_overcrowding}, we may use it to obtain the existence of $c,d_1,d_2 >0$ such that, for every $T>e^8$ and 
 $\ep \in \left[d_1{{\tfrac {\log T}{\sqrt T}}}, d_2\right]$,, we have
	\begin{align*}
	\p \left( N_G(T) \geq   \left\lceil \frac{A-B}{\pi}\,T\right\rceil + \ep T  \right) \le \exp(-c(\ep/\log \ep)^4 T^2).
	\end{align*}
	%and for every $\ep >0$, there exists $c>0$ for which
 %\begin{align*}	\p \lb N_G(T) \geq  \lfloor (A-B)T/\pi \rfloor - \ep T \rb  \geq \exp(-cT).	\end{align*}
This together with \eqref{eq:undc_402} yields the upper bound in Theorem~\ref{thm_undercrowding}. \qed
\begin{proof}[Proof of Claim \ref{claim_undercrowding}]
%Recall from  that any SGP with spectral measure $\mu$ has a series representation in terms of functions from $\mathcal{F}L^{2}_{\text{symm}}(\mu)$. We now use this result to get a series representation for $F$ and $G$. 
Let $\{f_n\}_{n=1}^{\infty}$ be an orthonormal basis for $\Lsym$ as per Lemma~\ref{lem:series_repn}. For every $n \in \nat$, consider the function $g_n$ defined as follows:
\begin{align*}
	g_n(t) := \begin{cases} 
f_n(t+A)\text{, if }t<0,\\
f_n(t-A)\text{, if }t\geq 0. 
\end{cases}
\end{align*}
Note that $\{g_n\}_{n=1}^{\infty}$ form an orthonormal basis for $\mathcal{L}^{2}_{\text{symm}}(\nu)$. Lemma~\ref{lem:series_repn} thus implies
\begin{equation}\label{eq: G series}
F \overset{d}{=} \sum_{n=1}^\infty \xi_n \widehat{f}_n,\quad \text{and} \quad G \overset{d}{=} \sum_{n=1}^\infty \xi_n \widehat{g}_n,
\end{equation}
where $\xi_n \overset{\text{i.i.d.}}\sim \cN (0,1)$, where
\[
\widehat{f}_n(x) = \int_{\re} f_n(\lm) e^{-i\lm x} d\mu (\lm), \quad \widehat{g}_n(x) = \int_{\re} g_n(\lm) e^{-i\lm x} d\tmu (\lm).
\]
Using the relation between $g_n$ and $f_n$, we compute:
	%We also have the following relation between the Fourier transforms of $f_n$ and $g_n$: 
\begin{align}\label{eq:decom}
	\notag\widehat{g}_n(x) &= \int_{\re} g_n(\lm) e^{-i\lm x} d\tmu (\lm) \\
	&\notag = \int_{B-A}^0 f_n(\lm+A) e^{-i\lm x} d\tmu(\lm) +  \int_{0}^{A-B} f_n(\lm-A) e^{-i\lm x} d\tmu(\lm) \\
	& \notag= e^{iAx} \int_{B}^{A} f_n(\tau) e^{-i\tau x} d\mu(\tau) + e^{-iAx} \int_{-A}^{-B} f_n(\tau) e^{-i\tau x} d\mu(\tau ) \\ 
 &\notag = (\cos(Ax) + i\sin(Ax) )  \int_{B}^{A} f_n(\tau) e^{-i\tau x} d\mu(\tau) 
 + (\cos(Ax) - i\sin(Ax)) \int_{-A}^{-B} f_n(\tau) e^{-i\tau x} d\mu(\tau )\\
	& = \cos(Ax) \cdot \widehat{f}_n(x) + \sin(Ax) \cdot h_n(x)
	\end{align}
	where %$h_n = \int_B^A f_n(\tau) \sin(\tau x) d\mu(\tau)$.
	\begin{align*} 
 h_n &=i\lb \int_{B}^A f_n(\tau) e^{-i\tau x} d\mu(\tau) 
 - \int_{-A}^{-B} f_n(\tau) e^{-i\tau x} d\mu(\tau ) \rb \\ 
 &= i\int_\R \left(f_n \mathbbm{1}_{[0,\infty)}(\tau) - f_n\mathbbm{1}_{(-\infty,0)}(\tau)\right)e^{-i \tau x} d\mu(\tau).
 \end{align*}
	Note that
 $\{i f_n \mathbbm{1}_{[0,\infty)} -i f_n\mathbbm{1}_{(-\infty,0)}\}$ also forms a basis to $\Lsym$,
 %Therefore, $h_n$ is real valued (recall that $f_n(s) = \overline{f_n(-s)}$)
 so that the random series $H := \sum_{n=1}^{\infty} \xi_n h_n$ is a SGP with spectral measure $\mu$ (by means of Lemma~\ref{lem:series_repn}). 
Plugging~\eqref{eq:decom} into~\eqref{eq: G series}, we obtain %\tm{Need to justify the following equality where we split the sum}
	\begin{align*}
	G(x) &= \sum_{n=1}^{\infty} \xi_n \widehat{g}_n(x) = \cos(Ax) \sum_{n=1}^{\infty} \xi_n \widehat{f}_n(x) + \sin (Ax) \sum_{n=1}^{\infty} \xi_n h_n(x),\\
	& = \cos(Ax) F(x) + \sin(Ax) H(x),
	\end{align*}
 %$F$ is the centered SGP corresponding to spectral measure $\mu$, and $H$ is a centered Gaussian process on $\re$ given by 
%$H(x) := \sum_{n=1}^{\infty} \xi_n h_n(x)$. 
which completes the proof of Claim \ref{claim_undercrowding}.
\end{proof}
\section*{Acknowledgements}
%We wish to thank Mikhail Sodin and Alon Nishry for their comments and for many useful discussions, and to Manjunath Krishnapur for introducing us to the topic.

We are very grateful to Mikhail Sodin, Alon Nishry and  Manjunath Krishnapur for their encouragement and for many stimulating discussions on the subject of this paper. 

	\bibliographystyle{plain}
	\bibliography{reference_oc}
	\end{document}